\newcommand{\C}{\mathbb{C}}\newcommand{\R}{\mathbb{R}}\newcommand{\Z}
{\mathbb{Z}}\newcommand{\ov}{\overline}
\newcommand{\pa}{\partial}\newcommand{\Lb}{\Lambda}
\newcommand{\lb}{\lambda}\newcommand{\Om}{\Omega}
\newcommand{\pb}{\ov\partial}
\newcommand{\Pha}{\mathbb{P}^{0,1}}\newcommand{\ha}{{\cal H}^{0,1}(L^\ast)}
\newcommand{\Phh}{\mathbb{P}^{1,0}}\newcommand{\hh}{{\cal H}^{1,0}(L)}
\newcommand{\Div}{\text{div}}
\newcommand{\Deg}{\text{deg}}
\newcommand{\ord}{\text{ord}}
\newcommand{\DIV}{\text{Div}}
\newcommand{\cpl}{(\!(}\newcommand{\cpr}{)\!)}
\newcommand{\anl}{\langle\!\langle}\newcommand{\anr}{\rangle\!\rangle}
\newcommand{\vf}{\varphi}\newcommand{\vfs}{\varphi^\ast}
\newcommand{\om}{\omega}
\newcommand{\intl}{\int\limits}
\newtheorem{thm}{Theorem}[section]
\newtheorem{lem}[thm]{Lemma}
\newtheorem{cor}[thm]{Corollary}
\theoremstyle{remark}
\newtheorem{rem}[thm]{Remark}
\theoremstyle{definition}
\tikzset{->-/.style={decoration={
  markings,
  mark=at position .6 with {\arrow{>}}},postaction={decorate}}}
\tikzset{maina/.style={decoration={markings,mark=at position .6 with {\arrow{>}}},postaction={decorate},
circle,fill=white!50,draw,font=\sffamily\footnotesize}}
\tikzset{-<-/.style={decoration={
  markings,
  mark=at position .4 with {\arrow{<}}},postaction={decorate}}}
\title{Holomorphic Triples and the Prescribed Curvature
Problem on $S^2$}
\author{Alexandre C. Gon\c calves\footnote{Departamento de Computa\c c\~ao e Matem\'atica FFCLRP - USP, Av. Bandeirantes 
3900, 14040-901, Ribeir\~ao Preto, SP, Brasil. {\rm\bf acasa@ffclrp.usp.br}}}
\begin{document}

\maketitle

{\bf Keywords:} Holomorphic Triples, Conformal Curvature equations, Cohomology classes.

{\it MSC:} 35J15; 58J05; 32F32.

\begin{abstract}
We prove new results on existence of solutions for the prescribed gaussian
curvature problem on the euclidean sphere $S^2$. Those results are achieved by relating
this problem with the holomorphic triples theory on Riemann surfaces. We think this approach
might be applied to study some other semi-linear elliptic equations of $2^{\rm nd}$ order
on the sphere. 
\end{abstract}

\section{Introduction}

Let $M$ be a closed Riemann surface with metric $g_0$. By a pointwise conformal metric
we mean another metric $g$ given by dilation of $g_0$ by a positive smooth function.
Therefore, we can write $g=e^{2u}g_0$ for a a function $u\in C^{\infty}(M)$.
If $K_0$ and $K$ denote the gaussian curvatures of $g_0$
and $g$, respectively, it can be shown \cite{KW1}
\begin{equation}\label{curveq}
\Delta u+Ke^{2u}-K_0=0,
\end{equation}
where $\Delta$ denotes the Laplace-Beltrame operator on the metric $g_0$. Thus, 
finding a metric pointwise conformal to $g_0$ with curvature $K$ is equivalent
to finding classical solutions to the elliptic equation \eqref{curveq}.

This problem has been treated by several authors since the late 1960s 
\cite{Bg1,Bg2,KW1,KW2}. In \cite{KW1} Kazdan and Warner  obtained some general necessary and 
suficient conditions on the functions $K,K_0$ to assure existence of solutions to
\eqref{curveq}. They also found some non-existence conditions mainly in the
case of the euclidean sphere. 

On the other hand, it has long been known that equations like \eqref{curveq} are a particular 
case of the theory of holomorphic triples over K\"ahler manifolds \cite{Gp2,BrGp}. This theory grew out of 
the seminal work of Donaldson and Uhlenbeck-Yau about special metrics on stable vector bundles, which developed into 
an active area of work since the 1980s \cite{Do1,Do2,UY,Si,Br1,Br2,Gp2}.
The Vortex equation was introduced in \cite{Br1} and evolved into the holomorphic triples theory \cite{Gp2,BrGp},
where not only holomorphic vector bundles, but also prescribed cohomology classes on the bundles are considered.

In \cite{Go2} the study of equation \eqref{curveq} is presented in connection with the vortex and holomorphic triples 
theory, by means of two distinct though related problems:
\begin{align}
\Delta u + |[\phi]|_u^2-\lambda =0 \label{phieq},\\
\Delta u + |[\eta]|_u^2-\lambda =0 \label{etaeq}.
\end{align}

Equations \eqref{phieq} and \eqref{etaeq} are defined on a closed Riemann surface $M$ for a real parameter $\lb>0$ 
and for \emph{cohomology classes} $[\phi]$ and $[\eta]$ living, respectively, in the cohomology complex of
holomorphic line bundles $L$ and $L^\ast$ over $M$. 
The terms $|[\phi]|_u^2$ and $|[\eta]|_u^2$ refer to the pointwise squared norm of representatives of these
classes, in a hermitian metric given by dilation of the original metric by a factor $e^{2u}$. The function $u$ is 
a real smooth function on $M$ and is meant to be the unknown in the equations. 

In the prescribed curvature problem presented by equation \eqref{curveq} one is often interested in the 
case $K_0\equiv$constant. Since the work of Kazdan and Warner this is already well known for all surfaces 
with non-positive Euler characteristic, as well as for the projective plane $\mathbb{PR}^2$. Despite some non-trivial 
non-existence conditions have been found, the case of $M=S^2$ is where most open questions remain. 
It amounts to say that up to our knowledge, all results on existence for \eqref{curveq} after \cite{KW2} play
on several suficient conditions for the function $K$, one of them being $K>0$ \cite{CY,JunW}. Existence for 
\eqref{curveq} is also known when $K$ is symmetric about the origing (considering the cannonical 
inclusion $S^2\hookrightarrow\R^3$), after the work \cite{Ms}.

Our results apply for functions $K$ which are the squared modulus of holomorphic
sections, typically having some zeros, and not necessarily symmetric about the origin. 
Most importantly, those results can only be established after we
explicitly connect equations \eqref{phieq} and \eqref{etaeq}, and strongly rely on algebraic-geometric 
elements of the involved bundles, like their Chern classes. We conjecture that this algebraic fact we use for 
studying equation \eqref{curveq} might be applied even for more general functions $K$, and has not been 
pursued by other authors so far.

A brief description of this work: in section 2 we collect some well-known facts on the theory of line
bundles over riemann surfaces, as well as results on metric equations like the vortex equation; in section
3 we prove the main results necessary to understand the cohomology classes of the dual bundle $L^\ast$ from
the analytical viewpoint, contained in Lemmas \ref{meromorphic-on-E}, \ref{sol_delbar-eq} and \ref{calc-deg};
and in section 4 we apply those results to show existence or non-existence of solutions for \eqref{curveq},
for some conformal curvatures $K$, which are summarized by Lemma \ref{non-exist-lem} and Theorems 
\ref{exist-top-arg} through \ref{existence_lem-ger}.

I would like to acknowledge the encouragement of my department colleagues while this manuscript was written,
and also the comments of the referees that improved the presentation of this work.

\section{Basics on the Geometry of Holomorphic Bundles}

This section is only meant to set up notation. For a deep study throughout these matters we recommend \cite{GH,KN}.

\subsection{Hermitian Bundles and Cohomology}

Let $E$ be a smooth complex vector bundle over a complex manifold $X$. Associated to $E$ we have the
dual bundle $E^\ast$, conjugate bundle $\ov E$ and endomorphism bundle $End(E)$. A (hermitian) metric is 
then a smooth isomorphism $H:E\to\ov E^\ast$ which is positive definite in each fiber. The bundle $E$ together 
with the structure given by $H$ is a \emph{hermitian bundle}.

Denote by $(T^\ast X)_\C$ the complexified cotangent bundle of $X$, which splits as $(T^\ast X)_\C=T^{1,0}X\oplus T^{0,1}X$. 
The bundle $T^{1,0}X$ is the holomorphic cotangent bundle of $X$ (home of the famous ``holomorphic differentials'').
Let $\Lb^{p,q}T^\ast X=\Lb^pT^{1,0}X\otimes\Lb^qT^{0,1}X$ for non-negative integers $p,q$, and let $\Gamma(\cdot)$ be 
the functor that takes a bundle to its space of smooth sections. We set $\Om^{p,q}(E)=\Gamma(\Lb^{p,q}T^\ast X\otimes E)$.
Any $\phi\in\Om^{p,q}(E)$ is a smooth section of holomorphic type $(p,q)$ and values in $E$. 

A \emph{holomorphic structure} on $E$ is an operator $D'':\Om^{p,q}(E)\longrightarrow\Om^{p,q+1}(E)$
that satisfies $(D'')^2=0$ and enjoys some typical properties of a covariant derivative (see \cite{GH,KN}).
Indeed, a \emph{connection} is a covariant derivative $D:\Om^m(E)\longrightarrow\Om^{m+1}(E)$, where 
$\Om^m(E)=\oplus_{p+q=m}\Om^{p,q}(E)$. Any connection decomposes after the splitting of the cotangent
bundle $D=D^{1,0}+D^{0,1}$. It is well known that for a given hermitian metric $H$ and holomorphic structure $D''$ 
there is only one connection $D=D_{H,D''}$ compatible with both, which means, $D^{0,1}=D''$ and $D(H)=0$.
This connection is called the \emph{Chern connection}.

The \emph{curvature} of a connection $D$ is the compound $F_D=D^2:\Om^m(E)\to\Om^{m+2}(E)$, which is a 2-form 
section of the bundle $End(E)$. An important topological invariant associated to the bundle $E$ is its 
\emph{first Chern class} $\frac i{2\pi}[tr(F_D)]$ which is a cohomology class on the base manifold 
(here $tr(\cdot)$ is the trace of the endomorphism coefficient of $F_D$ and $i=\sqrt{-1}$).
The curvature $F_D\in\Om^2(End(E))$ of any Chern connection has only the $(1,1)$ component, so that 
$F_D=D''\circ D'+D'\circ D''$ (we denote $D'=D^{1,0}$ from now on).

Since $(D'')^2=0$ we get a cochain complex $(\Om^{p,q}(E),D'')$ whose cohomology we denote
\begin{equation*}
  {\cal H}^{p,q}(E)=\frac{ker\,D'':\Om^{p,q}(E)\to\Om^{p,q+1}(E)}{im\,D'':\Om^{p,q-1}(E)\to\Om^{p,q}(E)}\ .
\end{equation*}

A \emph{holomorphic section} is any $\phi\in\Om^{p,q}(E)$ such that $D''\phi=0$. Similarly, an \emph{anti-holomorphic
section} is any section $\eta$ solving $D'\eta=0$, for a given $D'$ operator.

Let $\phi\in\Om^{p,q}(E)$, we set the $H$-dual of $\phi$ as $\phi^{\ast H}=\ov{H(\phi)}\in\Om^{q,p}(E^\ast)$. 
The $H$-dual of a section valued form is obtained by conjugating the form part and dualizing, in the usual way,
the bundle coefficient. If the connection $D$ is hermitian then a section $\phi$ is holomorphic if and only if
$\phi^{\ast H}$ is anti-holomorphic. 

\subsection{Line Bundles, Degrees and Divisors over Surfaces}

We now turn our attention to the case of a closed oriented Riemann surface $X=M$. 

Recall that a \emph{meromorphic section} on the holomorphic bundle $E$ over $M$ is a holomorphic
section $\phi$ on $M-\{x_1,\dots,x_t\}$ and such that in a neighborhood of each $x_j$, $\phi=z_j^{m_j}\zeta_j$,
where $z_j$ is a holomorphic local coordinate on $M$ with $z_j(x_j)=0$ and $\zeta_j$ is a regular 
holomorphic local section. The \emph{divisor} of $\phi$ is the formal linear combination $\DIV(\phi)=\sum_{j=1}^tm_j.x_j$,
and the degree of $\phi$ is $\Deg(\phi)=\sum_{j=1}^tm_j$. The integer $m_j$ is the \emph{order} of $\phi$ at $x_j$, 
$m_j=\ord_{x_j}(\phi)$.

A \emph{line bundle} is a holomorphic bundle $L$ of rank 1 over $M$. It can be shown that any line bundle
has a non-vanishing meromorphic section $\phi$ (\cite{DV}), and we set $\Deg(L)=\Deg(\phi)$. Since the endomorphism bundle of 
$L$ is just the trivial bundle $M\times\C$, and the curvature reduces to a closed 2-form on $M$, we get an
analitycal way of computing its degree,
\begin{equation}
  \deg(L)=\int_M\,\frac i{2\pi}F_D.
\end{equation}
Observe that $\hh$ is the space of holomorphic sections of the bundle $T^{1,0}(M)\otimes L$, hence to avoid it 
to be trivial we always assume 
\begin{equation}\label{degL>eulerM}
\Deg(L)\geq-\Deg(T^{1,0}(M)). 
\end{equation}

Of great interest to us are the cohomologies ${\cal H}^{1,0}(L)$ and ${\cal H}^{0,1}(L^\ast)$. Clearly ${\cal H}^{1,0}(L)$
is identified with the set of holomorphic sections. On the other hand any section on $L^\ast$ of holomorphic type $(0,1)$
is $D''$-closed, and so represents a cohomology class in ${\cal H}^{0,1}(L^\ast)$. By standard Hodge Theory \cite{GH}
any class on ${\cal H}^{0,1}(L^\ast)$ has exactly one harmonic representative, which must be $H$-antiholomorphic, hence the map 
\begin{equation}\label{ha-map-hh}
  \ast H:{\cal H}^{1,0}(L)\longrightarrow{\cal H}^{0,1}(L^\ast)
\end{equation}
is an anti-isomorphism between these two vector spaces.

By wedging the 1-forms we define a bilinear operator $\Om^{1,0}(L)\times\Om^{0,1}(L^\ast)\to\Om^2(\C)$ taking 
sections $\phi$ and $\eta$ to $(\phi\wedge\eta)$, and a coupling 
\begin{equation}\label{coupling}
\cpl\phi,\eta\cpr=\int_M i(\phi\wedge\eta).
\end{equation}
Because of Stokes' Theorem and integration by parts this coupling descends to cohomology classes, so that
$\cpl[\phi],[\eta]\cpr=\cpl\phi,\eta\cpr$, as long as $\phi$ and $\eta$ represent classes $[\phi]\in\hh$ and 
$[\eta]\in\ha$, respectively. Similarly, we have a coupling given by the metric $H$ by setting
$\anl\phi,\psi\anr_H=\cpl\phi,\psi^{\ast H}\cpr$, for any $\phi,\psi\in\Om^{1,0}(L)$.

Two metrics $H$ and $H_0$ on $L$ are related by a  positive dilation in each fiber, so that $H=H_u=H_0e^{2u}$
for a smooth function $u$ on $M$. For a section $\phi$ on $L$ we have 
$|\phi(x)|_{H_u}^2=|\phi(x)|_u^2=|\phi(x)|_0^2e^{2u(x)}$ for any $x\in M$, and for a section $\eta$ on $L^\ast$ 
it holds $|\eta(x)|_u^2=|\eta(x)|_0^2e^{-2u(x)}$, for the metric on $L^\ast$ is set by duality. 
The curvatures $F_{H_u}$ and $F_{H_0}$ associated to the correspondent Chern connections are related by 
$i\Lb F_{H_u}=i\Lb F_{H_0}-\Delta u$, where $\Lb$ is the contraction with the volumn element $\nu$ on $M$
and $\Delta$ is the Laplace-Beltrame operator on functions. Assuming that $|M|=1$ and $H_0$ is a metric yielding 
constant curvature $i\Lb F_{H_0}=2\pi\deg(L)$ we obtain
\begin{equation}\label{Fu-F0}
  i\Lb F_{H_u}=2\pi\Deg(L)-\Delta u.
\end{equation}

We restrict to the case of the euclidean sphere $M=S^2$, but we dilate the standard metric by a constant factor
so that its gaussian curvature is $4\pi$ and $|S^2|=1$. If $x\in S^2$ is any point we set $z=z_x:S^2-\{x\}\to\C$ 
as a stereographic projection with north pole at $x$. 

Two important facts about bundles over the sphere are stated below. The first one comes from a simple computation
using a stereographic coordinate, while the proof for the second can be found in \cite{Gu}. 
\begin{lem}\label{facts_on-sphere}
Let $M=S^2$ be the base manifold. Then\\
(a) Line bundles are classified by their degrees.\\
(b) Any rank 2 holomorphic bundle $E$ splits holomorphically as the sum of line bundles, $E=L_1\oplus L_2$. 
\end{lem}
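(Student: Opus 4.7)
The plan is a transition-function computation on the two-chart atlas of $S^2$. Cover $S^2$ by the stereographic charts $U_0=S^2\setminus\{\text{north}\}$ and $U_\infty=S^2\setminus\{\text{south}\}$, both biholomorphic to $\C$ with overlap $U_0\cap U_\infty\cong\C^\ast$. Any holomorphic line bundle $L$ is then determined up to isomorphism by a single transition cocycle $g\in\mathcal O^\ast(\C^\ast)$. I would write $g(z)=z^n e^{f(z)}$ with $n\in\Z$ the winding number of $g$ about $0$ and $f\in\mathcal O(\C^\ast)$, then split the Laurent expansion $f=f_++f_-$ into its Taylor part $f_+\in\mathcal O(\C)$ and its principal part $f_-$, the latter holomorphic near $\infty$. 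Modifying the trivializations on $U_0$ and $U_\infty$ by $e^{f_+}$ and $e^{-f_-}$ reduces the cocycle to $z^n$; hence $L$ depends only on $n$. A final check---evaluating $\Deg$ on an explicit meromorphic section of the normal form, or integrating the Chern form as in (2)---identifies $n$ with $\Deg(L)$, completing the classification.

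\textbf{Part (b).} For this I would invoke the rank-two case of Grothendieck's splitting theorem on $\mathbb P^1$. Let $n_0$ be the smallest integer for which $H^0(E\otimes\mathcal O(n_0))\neq 0$; existence is guaranteed by Riemann--Roch on $S^2$. A nonzero section $s$ of $E\otimes\mathcal O(n_0)$ cannot vanish anywhere, because if $s(p)=0$ then $s$ would be divisible by a local uniformizer at $p$, yielding a section of $E\otimes\mathcal O(n_0-1)$ and contradicting the minimality of $n_0$. The nowhere-vanishing $s$ therefore defines an inclusion $L_1:=\mathcal O(-n_0)\hookrightarrow E$ with line-bundle quotient $L_2:=E/L_1$ (a line bundle thanks to the rank hypothesis), giving an extension
\[
  0\longrightarrow L_1\longrightarrow E\longrightarrow L_2\longrightarrow 0
\]
whose class $\xi\in H^1(L_1\otimes L_2^\ast)$ controls the splitting. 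By part (a), $L_1\otimes L_2^\ast\cong\mathcal O(d)$ with $d=\Deg L_1-\Deg L_2$, and the standard vanishing $H^1(S^2,\mathcal O(d))=0$ for $d\geq -1$ would force $\xi=0$ and give the desired $E\cong L_1\oplus L_2$.

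\textbf{Main obstacle.} The decisive step is the numerical bound $\Deg L_1-\Deg L_2\geq -1$, equivalently $\Deg L_2\leq -n_0+1$. I plan to twist the short exact sequence above by $\mathcal O(n_0-1)$ and read off the induced long exact cohomology sequence
\[
  H^0(\mathcal O(-1))\longrightarrow H^0(E\otimes\mathcal O(n_0-1))\longrightarrow H^0(L_2\otimes\mathcal O(n_0-1))\longrightarrow H^1(\mathcal O(-1)).
\]
The outer terms vanish by part (a) together with the standard sphere computation $H^0(\mathcal O(-1))=H^1(\mathcal O(-1))=0$, while the middle term vanishes by the very choice of $n_0$. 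Hence $H^0(L_2\otimes\mathcal O(n_0-1))=0$, which by (a) forces $\Deg L_2\leq -n_0$, even stronger than needed. This yields $\xi=0$ and the holomorphic splitting $E\cong L_1\oplus L_2$.
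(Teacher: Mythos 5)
Your proposal is correct; note that the paper itself offers no proof of this lemma at all, remarking only that part (a) ``comes from a simple computation using a stereographic coordinate'' and deferring part (b) to Gunning's lecture notes \cite{Gu}. Your part (a) is precisely the computation the author is alluding to: a two-chart cocycle on $\C^\ast$, normalized to $z^n$ by splitting the Laurent series of a logarithm, with $n$ then matched to $\Deg(L)$ via an explicit meromorphic section. Your part (b) is the standard rank-two Grothendieck--Birkhoff argument, and it meshes well with the paper's own toolkit: the extension class $\xi$ lives in ${\cal H}^{0,1}(Hom(L_2,L_1))$ exactly as in Lemma \ref{cohom-ext-corresp}, and the vanishing $H^1(S^2,\mathcal O(d))=0$ for $d\ge -1$ follows from the anti-isomorphism \eqref{ha-map-hh} together with the description of holomorphic sections as polynomials of bounded degree. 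Your long-exact-sequence step correctly yields $\Deg L_2\le -n_0=\Deg L_1$, which is stronger than the needed $\Deg L_1-\Deg L_2\ge -1$.

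The one point you pass over too quickly is the existence of the minimal twist $n_0$: Riemann--Roch guarantees that $H^0(E\otimes\mathcal O(n))\neq 0$ for $n$ large, but you also need this group to vanish for $n$ sufficiently negative, so that the set of admissible $n$ is bounded below. This is where the finiteness of $\Div(E)$ (which the paper quotes, also from \cite{Gu}) enters: a nonzero section of $E\otimes\mathcal O(n)$ saturates to a line subbundle of $E$ of degree at least $-n$, which is impossible once $-n>\Div(E)$. With that sentence added, the argument is complete.
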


Let $N=(0,0,1)$ and $z=z_N$. Any line bundle $L$ has a trivialization over $S^2-\{N\}$ given by a
``cannonical'' meromorphic section $\zeta_L$ whose singular set is $\{N\}$. By using the coordinate 
$w=1/z$ we get a section $\zeta_{L,S}$ holomorphic and regular over $S^2-\{S\}$, where $S=-N$. The
gauge transformation between them is
\begin{equation}
  \zeta_L(x)=w(x)^{\Deg(L)}\,\zeta_{L,S}(x)\qquad \text{for all}\ x\in S^2-\{S,N\}.
\end{equation}

If $\deg(L)\geq0$ an arbitrary holomorphic section of $L$ is given by $h\zeta_L$ for some polynomial $h=h(z)$ of degree 
bounded by $\deg(L)$. The bundle $T^{1,0}S^2$ is spanned by the holomorphic differential $\zeta_{T^{1,0}S^2}=dz$, hence
the set $\hh$ of holomorphic sections of $\Om^{1,0}(L)=\Gamma(L\otimes T^{1,0}S^2)$ consists of sections $\phi$ 
of the form $\phi=g\,\zeta_L\,dz$, where $g(z)$ is a polynomial with degree less than or equal to $\deg(L)-2$.

In coordinates, the $H$-dual of $\phi$ is the anti-holomophic section $\eta=\ov g\,\zeta_L^{\ast H}d\ov z$, and
the section $\zeta_L^{\ast H}$ can be written 
\begin{equation}
\zeta_L^{\ast H}=\frac{\zeta_{L^\ast}}{|\zeta_{L^\ast}|_H^2}=|\zeta_L|_H^2\,\zeta_{L^\ast}\ .
\end{equation}
For later use we express the $H$-norm of $\eta$ in the $z$ and $w$ coordinates:
\begin{equation}\label{eta-Hnorm}
  |\eta|_H^2=|g|^2|\zeta_L|_H^2|dz|^2=|g|^2|w|^{2(\Deg(L)-2)}|\zeta_{L,S}|_H^2|dw|^2.
\end{equation}

The next Lemma helps us to find an explicit expression for $|\zeta_L|_{H_0}^2$. 
\begin{lem}\label{curvature-merom}
Let $\zeta$ be a meromorphic section on $L$ and $H$ be a metric. Then in any open region where $\zeta$ is regular
the $H$-curvature of $L$ is given by $i\Lb F_H=-\Delta\ln|\zeta|_H$.
\end{lem}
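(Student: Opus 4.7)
The idea is to exploit $\zeta$ as a local holomorphic frame for $L$ on the region $U$ where it is regular and non-vanishing, and then reduce the curvature computation to a standard $\pa\pb$-calculation on functions. Relative to the frame $\zeta$ every smooth section of $L|_U$ can be written $f\zeta$, and the Chern connection takes the form $D(f\zeta)=(df+f\theta)\otimes\zeta$ for some smooth $1$-form $\theta$ on $U$. Two constraints pin $\theta$ down: since $\zeta$ is holomorphic, $D''\zeta=0$ forces $\theta$ to be of pure type $(1,0)$; and compatibility with $H$ applied to $\zeta$ gives $d|\zeta|_H^2=(\theta+\ov\theta)|\zeta|_H^2$. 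Taking the $(1,0)$ part yields $\theta=\pa\ln|\zeta|_H^2$.

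With $\theta$ in hand the curvature in this frame is immediate: $L$ being a line bundle, $\theta\wedge\theta=0$, so $F_H=d\theta$, and since $\theta$ is of type $(1,0)$ only the $\pb$ piece of $d$ contributes, giving
$$
F_H=\pb\pa\ln|\zeta|_H^2=-\pa\pb\ln|\zeta|_H^2.
$$
The final step contracts with the Kähler form: using the standard identity $i\Lb\,\pa\pb f=\tfrac12\Delta f$ for real smooth $f$ (with the sign convention for $\Delta$ fixed earlier in the paper) and the fact that $\ln|\zeta|_H^2=2\ln|\zeta|_H$, we obtain
$$
i\Lb F_H=-i\Lb\,\pa\pb\ln|\zeta|_H^2=-\tfrac12\Delta\ln|\zeta|_H^2=-\Delta\ln|\zeta|_H,
$$
as claimed.

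There is no real obstacle: once the Chern connection form is identified as $\pa\ln|\zeta|_H^2$ the rest is bookkeeping. The only place that needs care is pinning down the sign and normalization in the identity $i\Lb\,\pa\pb f=\tfrac12\Delta f$. A useful sanity check, which also fixes all conventions, is to compare two conformally related metrics $H_u=e^{2u}H_0$: since $\ln|\zeta|_{H_u}=\ln|\zeta|_{H_0}+u$, the lemma gives $i\Lb F_{H_u}=i\Lb F_{H_0}-\Delta u$, in agreement with \eqref{Fu-F0}.
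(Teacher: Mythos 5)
Your proof is correct. The paper in fact states Lemma \ref{curvature-merom} without proof (it is treated as a standard fact, in the spirit of \cite{GH}), so there is nothing to compare against; your argument is the canonical one: identify the Chern connection form in the local frame $\zeta$ as $\theta=\pa\ln|\zeta|_H^2$ from holomorphicity plus metric compatibility, get $F_H=d\theta=-\pa\pb\ln|\zeta|_H^2$, and contract. Your sanity checks also confirm you have matched the paper's conventions: the formula reproduces equation \eqref{Fu-F0} under $H_u=H_0e^{2u}$, and applied to \eqref{H0.def} it gives $i\Lb F_{H_0}=\frac{\Deg(L)}2\Delta\ln(1+|z|^2)=2\pi\Deg(L)$, exactly as used in the text. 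The only point worth flagging is that ``regular'' in the statement must be read as holomorphic \emph{and non-vanishing} (which is how the paper uses the word for local frames, and how you use it): at a zero of $\zeta$ of order $m$ the identity only holds distributionally, with an extra term $-2\pi m\,\delta$.
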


Taking the cannonical section $\zeta_L$ we get $\Delta\ln|\zeta_L|_{H_0}(x)=-2\pi\Deg(L)$ for all
$x\in S^2-\{N\}$. An inspection shows that $\Delta[\frac{\Deg(L)}2\ln(1+|z|^2)]=2\pi\Deg(L)$, thus
if we set 
\begin{equation}\label{H0.def}
  |\zeta_L|_{H_0}^2(x)=(1+|z(x)|^2)^{-\Deg(L)}\qquad\text{for}\ x\neq N
\end{equation}
we get a prospective function describing the metric $H_0$. To make sure it works fine we notice
that in the other trivialization
\begin{equation}\label{H0.def-alt}
|\zeta_{L,S}|_{H_0}^2(x)=|w(x)^{-\Deg(L)}\zeta_L(x)|_{H_0}^2=\frac1{(|w(x)|^2+1)^{\Deg(L)}}\qquad\text{for}\ x\neq S,
\end{equation}
so $H_0$ is smooth at each fiber of $L$. If any other metric $H_u$ yields constant curvature to
$L$ then by equation \eqref{Fu-F0} it holds $\Delta u=0$, so $u$ is a constant and $H_u$ is just a uniform
dilation of the given $H_0$. We set equation \eqref{H0.def} (or \eqref{H0.def-alt}) as the definition for
$H_0$.

\subsection{Holomorphic Extensions and Stability}

Recall that a holomorphic extension of $E_2$ by $E_1$ is a short exact sequence of holomorphic bundles 
and morphisms $e:\ 0\to E_1\to E\to E_2\to0$ over the same base manifold. There is a natural concept of
\emph{isomorphism of extensions}, and we define $Ext(E_2,E_1)$ as the set of classes of isomorphic extensions.
Also let $Hom(E_2,E_1)=E_1\otimes E_2^\ast$ be the bundle of homomorphisms $E_2\to E_1$. The proof of the next Lemma can 
be found in \cite{Ko,GH}. 
\begin{lem}\label{cohom-ext-corresp}
There is a natural one-to-one correspondence between $Ext(E_2,E_1)$ and ${\cal H}^{0,1}(Hom(E_2,E_1))$.
\end{lem}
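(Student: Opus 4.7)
The plan is to construct explicit mutually inverse maps
\begin{equation*}
\Phi: Ext(E_2,E_1) \longrightarrow {\cal H}^{0,1}(Hom(E_2,E_1)), \qquad \Psi: {\cal H}^{0,1}(Hom(E_2,E_1)) \longrightarrow Ext(E_2,E_1),
\end{equation*}
using the Dolbeault description of the holomorphic structure on a smoothly split extension.

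First I would define $\Phi$ as follows. Given an extension $e: 0 \to E_1 \xrightarrow{i} E \xrightarrow{p} E_2 \to 0$, choose a smooth splitting $s: E_2 \to E$ of $p$, which exists by a standard partition-of-unity argument since the sequence is fibrewise exact. This yields a smooth isomorphism $E \cong E_1\oplus E_2$, under which the operator $D''_E$ takes the upper-triangular block form
\begin{equation*}
D''_E \;=\; \begin{pmatrix} D''_{E_1} & \alpha \\ 0 & D''_{E_2} \end{pmatrix},
\end{equation*}
where the vanishing of the lower-left block expresses that $p$ is holomorphic, the diagonal blocks express that $i$ is holomorphic and that $p$ intertwines the Dolbeault operators, and the off-diagonal piece $\alpha$ is a smooth section of $\Lb^{0,1}T^\ast X \otimes Hom(E_2,E_1)$. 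The integrability $(D''_E)^2=0$ then forces $D''\alpha=0$, so $\alpha$ represents a class in ${\cal H}^{0,1}(Hom(E_2,E_1))$. I set $\Phi(e):=[\alpha]$ and check that replacing $s$ by another smooth splitting $s'$ changes $\alpha$ by $D''(s'-s)$ (since $s'-s\in\Gamma(Hom(E_2,E_1))$ as both sections lift the identity), while any isomorphism of extensions produces the same class through a short diagram chase.

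Next I would construct $\Psi$ in the opposite direction. Given a $D''$-closed representative $\alpha\in\Om^{0,1}(Hom(E_2,E_1))$, define $E_\alpha:=E_1\oplus E_2$ as a smooth bundle and endow it with the operator given by the matrix above. The condition $D''\alpha=0$ is exactly $(D''_{E_\alpha})^2=0$, so $E_\alpha$ becomes a holomorphic bundle, and the natural inclusion of $E_1$ and projection onto $E_2$ are holomorphic, producing an extension. If $\alpha'=\alpha+D''\beta$ for some $\beta\in\Gamma(Hom(E_2,E_1))$, the smooth map $(v_1,v_2)\mapsto(v_1+\beta(v_2),v_2)$ is a holomorphic isomorphism between the two extensions, so $\Psi[\alpha]$ is independent of the representative.

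Finally, $\Phi\circ\Psi=id$ is immediate from the construction of $E_\alpha$, and $\Psi\circ\Phi=id$ up to isomorphism of extensions holds because starting from $\Phi(e)=[\alpha]$ the bundle $E_\alpha$ is isomorphic to $E$ via the chosen splitting. The main technical obstacle is the bookkeeping at the first step: writing $D''_E$ in two different smooth splittings and verifying that conjugation by the upper-triangular change-of-basis shifts the off-diagonal entry precisely by a $D''$-exact term, so that the class $[\alpha]$ depends only on $e$ and not on auxiliary choices. Once this is in place the correspondence is manifestly natural in $E_1$ and $E_2$.
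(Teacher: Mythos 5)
Your proposal is correct and follows essentially the same route as the paper's proof: both extract the class from the upper-triangular block form of $D''_E$ in a smooth splitting, verify independence of the splitting and of the representative of the extension via $D''$-exact differences, and invert by equipping $E_1\oplus E_2$ with the operator $D''_\eta$. The only cosmetic difference is that the paper phrases the splitting as a choice of complementary subbundle $S$ with projections, whereas you phrase it as a section $s$ of the quotient map; these are equivalent.
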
  

For a holomorphic bundle $E$ we define
\begin{equation}
\Div(E)=\sup\,\{\Deg(J)\,|\,J\subset E\ \text{is a holomorphic line subbundle}\}.
\end{equation}
It is well known that $\Div(E)$ is finite on Riemann Surfaces \cite{Gu}. Taking $[\eta]\in{\cal H}^{0,1}(Hom(E_2,E_1))$ we 
can define $\Div[\eta]=\Div(E)$ where $E$ is the middle term of the extension associated to $[\eta]$.

Now fix $L_1,L_2$ line bundles over $S^2$ and consider holomorphic extensions $0\to L_1\to E\to L_2\to 0$. 
Therefore $E$ is a rank 2 vector bundle which is \emph{topologically}, but not \emph{holomorphically} in general, 
the direct sum of $L_1$ and $L_2$. We set from now on the bundle $L=L_2\otimes L_1^\ast$. 
Hence the set of extensions of $L_2$ by $L_1$ is just ${\cal H}^{0,1}(L^\ast)$. 

\begin{lem}\label{div-eta<=degL2}
Let $[\eta]\in\ha$. Then ${\rm div}[\eta]\leq\max\,\{{\rm deg}(L_1),{\rm deg}(L_2)\}$. In case 
${\rm deg}(L_2)>{\rm deg}(L_1)$ equality holds if and only if $[\eta]=0$.
\end{lem}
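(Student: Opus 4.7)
The plan is to pass through the extension picture provided by Lemma~\ref{cohom-ext-corresp}: the class $[\eta]\in\ha$ corresponds to a short exact sequence $e:0\to L_1\to E\xrightarrow{\pi}L_2\to 0$, and by definition $\Div[\eta]=\Div(E)$. So I only need to bound $\Deg(J)$ for an arbitrary holomorphic line subbundle $J\subset E$, and then analyze when the bound is attained.

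First I would handle the inequality. Given $J\subset E$, consider the restriction $\pi|_J:J\to L_2$. There are two cases. If $\pi|_J\equiv 0$, exactness forces $J\subset L_1$, and since $L_1$ is itself a line bundle, the inclusion is either trivial or an isomorphism of line bundles on the same base, so $\Deg(J)\leq\Deg(L_1)$. If $\pi|_J\not\equiv 0$, then $\pi|_J$ is a nonzero holomorphic section of $\mathrm{Hom}(J,L_2)=L_2\otimes J^\ast$, a line bundle of degree $\Deg(L_2)-\Deg(J)$; existence of a nonzero holomorphic section of a line bundle over a compact Riemann surface forces its degree to be nonnegative, hence $\Deg(J)\leq\Deg(L_2)$. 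Combining both cases yields $\Div(E)\leq\max\{\Deg(L_1),\Deg(L_2)\}$.

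Next I would prove the equality statement under the hypothesis $\Deg(L_2)>\Deg(L_1)$. The direction $[\eta]=0\Rightarrow\Div[\eta]=\Deg(L_2)$ is immediate: a split extension $E=L_1\oplus L_2$ contains $L_2$ as a line subbundle, matching the upper bound. For the converse, suppose some $J\subset E$ realizes $\Deg(J)=\Deg(L_2)$. The first case above would give $\Deg(J)\leq\Deg(L_1)<\Deg(L_2)$, which is excluded, so $\pi|_J$ must be a nonzero holomorphic morphism. Then $L_2\otimes J^\ast$ is a line bundle of degree $0$ over $S^2$, and by Lemma~\ref{facts_on-sphere}(a) it is holomorphically trivial. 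A nonzero holomorphic section of the trivial line bundle is nowhere-vanishing, so $\pi|_J:J\to L_2$ is a holomorphic isomorphism. Its inverse provides a holomorphic splitting $L_2\to E$ of $\pi$, which by the correspondence in Lemma~\ref{cohom-ext-corresp} means precisely that $[\eta]=0$.

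The only subtle step is the upgrade from ``nonzero holomorphic map $J\to L_2$'' to ``isomorphism of bundles'': on a general curve this would only give an injection of sheaves, but Lemma~\ref{facts_on-sphere}(a) (the classification of line bundles on $S^2$ by degree) makes the degree-$0$ Hom bundle trivial and its nonzero sections nowhere-vanishing. This is the place where the sphere hypothesis is genuinely used, and it is what makes the equality case tight.
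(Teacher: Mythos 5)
Your proof is correct and follows essentially the same route as the paper's: the same case split on whether $\pi|_J$ vanishes identically, and the same degree count on $L_2\otimes J^\ast$ to force $\pi|_J$ to be nowhere-vanishing in the equality case. The only cosmetic differences are that you bound $\Deg(J)\leq\Deg(L_2)$ via nonnegativity of the degree of a line bundle admitting a nonzero holomorphic section (the paper instead compares orders of a meromorphic section $\phi$ of $J$ with those of $\pi(\phi)$ pointwise), and your appeal to Lemma~\ref{facts_on-sphere}(a) in the equality case is not actually where the argument needs the sphere: on any compact Riemann surface a nonzero holomorphic section of a degree-zero line bundle is already nowhere-vanishing, since its zero divisor is effective of degree zero.
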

\begin{proof}
Let 
\begin{equation}\label{extension_eta}
\qquad 0\to L_1\to E\overset\pi\to L_2\to 0
\end{equation}
be the extension associated to $[\eta]$. Let $J\subset E$ be a holomorphic line subbundle and
$\phi$ be a (non-trivial) meromorphic section of $J$. Consider first that $\pi(\phi)\equiv 0$. 
Then the range of $\phi$ lies within $L_1$, hence $J=L_1$ and $\Deg(J)=\Deg(L_1)$.

Now suppose $\pi(\phi)\not\equiv0$. Then $\pi(\phi)$ is a meromorphic section on $L_2$. If $x\in M$
let $z$ be a holomorphic coordinate with $z(x)=0$. Let $\zeta$ be a regular holomorphic section on J in
a neighborhood of $x$. Then $\phi(z)=h(z)\zeta(z)$ for $z$ close to $0$, where $h(z)$ is some local meromorphic
function. Clearly $\ord_x(\phi)=\ord_x(h)\leq\ord_x(h)+\ord_x(\pi(\zeta))=\ord_x(\pi(\phi))$, since 
$\ord_x(\pi(\zeta))\geq0$. We conclude that 
\begin{equation}
\Deg(\pi(\phi))=\sum_{x\in M}\ord_x(\pi(\phi))\geq\sum_{x\in M}\ord_x(\phi)=\Deg(\phi).
\end{equation}
Therefore $\Deg(L_2)\geq\Deg(J)$. Considering both cases we arrive at $\Deg(J)\leq\max\,\{\Deg(L_1),\Deg(L_2)\}$.
From this and the arbitrarity of $J$ the first assertion of the Lemma follows.

Now if $[\eta]=0$ the extension \eqref{extension_eta} is trivial, hence $L_2\hookrightarrow E$ holomorphically.
Thus $\Div(E)\geq\max\{\Deg(L_1),\Deg(L_2)\}$. The first part of the proof already gave us the reversal inequality,
and we obtain $\Div[\eta]=\max\{\Deg(L_1),\Deg(L_2)\}$.

Finally assume $\Deg(L_2)>\Deg(L_1)$ and $\Div[\eta]=\max\{\Deg(L_1),\Deg(L_2)\}=\Deg(L_2)$.
Let $J\subset E$ be a holomorphic line subbundle such that $\Deg(J)=\Deg(L_2)$. Since $J\neq L_1$ 
the restriction map $\pi|_J:J\to L_2$ is a non-trivial holomorphic morphism. Further $\pi|_J$ is a
section of $L_2\otimes J^\ast$, and $\Deg(L_2\otimes J^\ast)=\Deg(L_2)-\Deg(J)=0$, hence $\pi|_J$ 
has no zeros. This is equivalent to saying that $E=L_1\oplus J$ holomorphically. It is straightforward
to check that the trivial extension $0\to L_1\to E\to J\to 0$ is isomorphic to extension \eqref{extension_eta}.
The consclusion is that $[\eta]=0$, and the second assertion of the Lemma is proven.
\end{proof}

\subsection{The metric equations}

Let $0\to E_1\to E\to E_2\to 0$ be an extension, thus $E_1\subset E$ is subholomorphic and $E_2=E/E_1$.
A metric $H=H_E$ induces metrics $H_j$ on $E_j$ and an identification $E_2\sim E_1^{\perp H}$.
Respect to the orthogonal decomposition $E=E_1\oplus E_2$ we can write the equation
\begin{equation}\label{hol-ext}
    i\Lb F_H = \left(\begin{array}{cc} \ \tau_1\  & 0 \\ 0 & \ \tau_2\  
  \end{array}\right)\ ,
\end{equation}
where the right-hand-side is a section of $End(E)$, assembled as a weighted combination of the orthogonal projections 
$E\to E_j$, $j=1,2$, for some real constants $\tau_1,\tau_2$, and $\Lambda$ is the contraction with the K\"ahler form
on the surface.

The problem stated by equation \eqref{hol-ext} is a particular case of the holomorphic and cohomology triples problems,
which have been introduced in \cite{Gp2} and \cite{BrGp}. These problems constitute a generalization of the
Hermite-Einstein equation over K\"ahler manifolds \cite{UY}. The typical theorem in those theories, known as
\emph{the Hitchin-Kobayashi correspondence}, states that a solution exists for the metric equations as long as
an algebraic condition called \emph{stability} (or \emph{polystability} in a more general case) is satisfied for the 
involved bundles and perhaps some other structures, like prescribed sections or cohomology classes.

To properly express this theory, whose details can be found in \cite{BrGp,Gp2}, we would need to elaborate 
on definitions and notation that go far beyond the line of our article. Instead, we'd rather state in a 
summary what concern to us. 
Since we have line bundles $L_1,L_2$ over the riemann surface $M$ an extension of $L_2$ by $L_1$ is some 
$[\eta]\in\ha$. If $\alpha\in\R$ we define the $\alpha$-slope of $[\eta]$,
\begin{equation}
\mu_\alpha([\eta])=\frac{\Deg(L_1)+\Deg(L_2)+\alpha}2.
\end{equation} 
Then we say that $[\eta]$ is $\alpha$-stable if 
\begin{equation}\label{alpha-stable}
\max\{\Deg(L_1),\Div[\eta]+\alpha\}<\mu_\alpha([\eta]).
\end{equation}
From inequality \eqref{alpha-stable} and the definition of $\mu_\alpha$ we conclude that $[\eta]$ is
$\alpha$-stable if and only if $\Deg(L_1)-\Deg(L_2)<\alpha<\Deg(L_1)+\Deg(L_2)-2\Div[\eta]$. A necessary
condition for $\alpha$-stability is then that a strict inequality happen between the first and the third members
of the latter. The next theorem replicates the results from Proposition 3.8 and Theorem 3.9 of \cite{BrGp}.
\begin{thm}\label{estavel} 
Let $\tau_1$ and $\tau_2$ be real numbers such that $\tau_1+\tau_2=2\pi({\rm deg}(L_1)+{\rm deg}(L_2))$. 
Let $\alpha=\frac1{2\pi}(\tau_1-\tau_2)<0$ and assume $[\eta]\neq0$. Then there is a metric $H_E$ 
satisfying \eqref{hol-ext} if and only if $[\eta]$ is $\alpha$-stable.
\end{thm}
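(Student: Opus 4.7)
My plan is to recognize Theorem \ref{estavel} as the rank-one specialization of the Hitchin--Kobayashi correspondence for cohomology triples proved in \cite{BrGp}, verify that our $\alpha$-stability condition matches theirs, and then invoke their Proposition 3.8 and Theorem 3.9. The only non-trivial content is the dictionary between the two notations and the unpacking of the rank-one case; I set this up first and then sketch each direction.

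By Lemma \ref{cohom-ext-corresp} the class $[\eta]\in\ha$ is exactly the datum of the extension \eqref{hol-ext}, so $(L_1,L_2,[\eta])$ is a cohomology triple in the sense of \cite{BrGp}. From $\tau_1+\tau_2=2\pi(\Deg(L_1)+\Deg(L_2))$ and $\alpha=\frac1{2\pi}(\tau_1-\tau_2)$ one computes $\tau_1=2\pi\mu_\alpha([\eta])$ and $\tau_2=2\pi\mu_{-\alpha}([\eta])$, so the diagonal entries of \eqref{hol-ext} are exactly the slopes that appear in their Hitchin--Kobayashi inequality. Condition \eqref{alpha-stable} is then their $\alpha$-stability restricted to rank one: the two competing subobjects are $L_1$ itself, contributing $\Deg(L_1)$, and an arbitrary holomorphic line subbundle $J\subset E$ with $J\neq L_1$, for which Lemma \ref{div-eta<=degL2} gives $\Deg(J)\leq\Div[\eta]$ and the slope test is shifted by $\alpha$ because $J$ maps non-trivially to $L_2$.

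For necessity, assume $H_E$ solves \eqref{hol-ext}. Integrating the trace of \eqref{hol-ext} over $M$ recovers $\tau_1+\tau_2=2\pi\Deg(E)$ automatically. For a holomorphic line subbundle $J\subset E$ with induced metric $H_J$ and second fundamental form $\beta$, the Gauss-type identity
\[
i\Lb F_{H_J}=\bigl(i\Lb F_{H_E}\bigr)\bigr|_J-|\beta|^2_{H_E}
\]
integrates to an upper bound on $2\pi\Deg(J)$ by the pointwise restriction of $\mathrm{diag}(\tau_1,\tau_2)$ to $J$, weighted by the angles at which $J$ meets $L_1$. Specializing to $J=L_1$ yields $\Deg(L_1)\leq\mu_\alpha([\eta])$, and for $J\neq L_1$ the corresponding bound reads $\Deg(J)+\alpha\leq\mu_\alpha([\eta])$. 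Equality in either case would force $\beta\equiv 0$, hence a holomorphic splitting of \eqref{extension_eta}, contradicting $[\eta]\neq 0$; strict $\alpha$-stability follows.

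For sufficiency I would minimize the Donaldson-type functional
\[
\mathcal M(H)=\int_M\bigl|i\Lb F_H-\mathrm{diag}(\tau_1,\tau_2)\bigr|^2_{H}\,\nu
\]
over metrics compatible with the flag induced by $L_1\hookrightarrow E$, or equivalently run the associated heat flow; $\alpha$-stability is exactly the algebraic condition preventing a minimizing sequence from concentrating along a destabilizing subbundle and is what enables the required a priori $C^0$ bound on $\log\det H$. The main obstacle is precisely this a priori estimate, which is the analytic heart of \cite{Br1,BrGp} and does not appear to admit any real simplification on $S^2$; for the subsequent applications in the paper we treat Theorem \ref{estavel} as a black box.
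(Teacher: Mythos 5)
Your proposal takes essentially the same route as the paper: the paper offers no independent proof of Theorem \ref{estavel} and simply states that it replicates Proposition 3.8 and Theorem 3.9 of \cite{BrGp}, which is exactly the reduction you carry out (your dictionary $\tau_1=2\pi\mu_\alpha([\eta])$, $\tau_2=2\pi\mu_{-\alpha}([\eta])$ is correct). Your extra sketch of necessity via the Gauss identity is consistent with that reference and implicitly uses $\tau_1<\tau_2$ in the angle-weighted bound for $J\neq L_1$, which is precisely the point the paper's following Remark makes about the hypothesis $\alpha<0$.
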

\begin{rem}
Proposition 3.8 of \cite{BrGp} skips the condition $\alpha<0$. That is actually necessary to derive 
the $\alpha$-stability in case the metric solution $H_E$ exists. By the way, there is a straightforward example  
of a solution $H_E$ for \eqref{hol-ext} in an extension over $S^2$, where any $\alpha\geq0$ is allowed, hence
outside the admissible range of $\alpha$-stability as defined by \eqref{alpha-stable}.
\end{rem}

Let $H=H_E$ be a metric satisfying \eqref{hol-ext} for an extension $[\eta]$ of line bundles. 
The metric connection on $E$ is then 
\begin{equation}
     D_E = \left(\begin{array}{cc} D_1 & A \\ -A^{\ast H} & D_2 \end{array}\right)\ ,
\end{equation}
for $D_j$ being the Chern connections on $L_j$ and $A$ being the second fundamental form of the
inclusion $L_1^{\perp H}\hookrightarrow E$. Computing $F_H=D_E^2$ and substituting into equation 
\eqref{hol-ext} we find the system
\begin{equation}\label{eqmot}
\left\{\begin{aligned} 
     i\Lb F_1 - i\Lb A\wedge A^{\ast H} & = \tau_1 \\
     i\Lb F_2 - i\Lb A^{\ast H}\wedge A & = \tau_2 \\
        D(A) & = 0\ .
\end{aligned}\right.\end{equation}
The form $A$ has holomorphic type $(0,1)$ since $L_1^{\perp H}\hookrightarrow E$ is antiholomorphic.
Indeed, $A$ is $D''$-closed, and its cohomology class is the one given in the beginning,
$[A]=[\eta]$. The third equation in \eqref{eqmot} implies that $A$ is antiholomorphic, and we can assume 
from now on that $A=\eta+D''\xi$ is the $H$-antiholomorphic representative of the class $[\eta]$.

Making $\lb=2\pi\Deg(L)+\tau_1-\tau_2$ and following a computation similar to \cite{Go2} (equations (3.8)-(3.10))
we obtain from \eqref{eqmot}
\begin{equation}\label{system-eta}
\left\{\begin{aligned} &\Delta u+2|\eta+D''\xi|_0^2e^{-2u}-\lb=0\\ &D'_u(\eta+D''\xi)=0\ ,\end{aligned}\right.
\end{equation}
where $u$ is the function associated to the pointwise metric change in $L,L^\ast$.

Therefore, a solution $H_E$ for \eqref{hol-ext} gives us a smooth function $u$ on $S^2$ and a section $\xi$ of $L^\ast$ 
that solve \eqref{system-eta}. Reciprocally, given a pair $(u,\xi)\in C^\infty(S^2)\times\Om^0(L^\ast)$ that 
solves \eqref{system-eta} it is straightforward to obtain the correspondent solution $H_E$ for \eqref{hol-ext} 
(see \cite{Go2}).

The dual problem of \eqref{system-eta} is stated as follows: using the $H$-identification given by \eqref{ha-map-hh}
we set $\phi=(\eta+D''\xi)^{\ast H_u}\in\hh$. Clearly $|\phi|_{H_u}^2=|\eta+D''\xi|_{H_u}^2$, hence
\eqref{system-eta} becomes equivalent - for the particular solution $u$ - to system
\begin{equation}\label{system-phi}
\left\{\begin{aligned} &\Delta u+2|\phi|_0^2e^{2u}-\lb=0\\ &D''(\phi)=0.\end{aligned}\right.
\end{equation}
\begin{rem}
The first of equations \eqref{system-eta} and \eqref{system-phi} carry a factor 2 which had been absorbed 
in equations \eqref{phieq} and \eqref{etaeq} (see also equations 3.6 and 3.8 in \cite{Go2}). 
\end{rem}

We take a minute to compare problems \eqref{system-eta} and \eqref{system-phi}. They are not quite
the same because varying the real parameter $\lb$ and keeping $[\eta]$ fixed will vary the metric,
and so will change the correspondent $[\phi]=[\eta]^{\ast H_u}$. It might sound that \eqref{system-phi}
is more likely to the taste of the analyst, because a fixed $[\phi]$ has only one representative regardless
of the metric and one has a shape for the term $|\phi|_0^2$. On the other hand the same class $[\eta]$
has different representatives for different metrics, making the sight of the term $|\eta+D''\xi|_0^2$ a bit
obscure.

Nevertheless, equations \eqref{system-phi} lose an important characteristic that is enjoyed by \eqref{system-eta}: its
linearization is not sign definite. This is roughly accounted for the difference in sign of the exponents in
$e^{2u}$ and $e^{-2u}$ of either one. Further, system \eqref{system-eta} has the results on extensions holding
in the range $0<\lb<2\pi\Deg(L)$. The translation of results from \eqref{system-eta} to \eqref{system-phi}
for some cases of $[\phi]$ is one of the main targets of this work.

Observe that we can refer to a solution $(u,\xi)$ for \eqref{system-eta} simply by $u$, since there is only one 
section $\xi=\xi(u)$ satisfying the second of equations \eqref{system-eta}. 
\begin{lem}{\rm(\cite{Go2} Theorem 4.7 and Corollary 4.9)}\label{exist-sol-eta}
Let $0<\lb_0<2\pi\,{\rm deg}(L)$. Assume there is a solution $u=u_0$ for \eqref{system-eta} in the parameters
$[\eta]=[\eta_0]$ and $\lb=\lb_0$. Then this solution is unique. There is a neighborhood
$U\times(\lb_0-\varepsilon,\lb_0+\varepsilon)\subset\ha\times\R$ of $([\eta_0],\lb_0)$ and a smooth
map $u:U\times(\lb_0-\varepsilon,\lb_0+\varepsilon)\to C^\infty(S^2)$ taking parameters to solutions
of \eqref{system-eta}.
\end{lem}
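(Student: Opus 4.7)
The plan is to decouple \eqref{system-eta} by eliminating $\xi$ via Hodge theory, then treat the resulting scalar PDE in $u$ by the implicit function theorem together with a maximum principle. For each smooth $u$ and each $[\eta]\in\ha$, the second equation $D'_u(\eta+D''\xi)=0$ characterizes $\eta+D''\xi$ as the unique $H_u$-antiholomorphic representative $A_u=A_u([\eta])$ of $[\eta]$; since $\Deg(L)>0$ forces the kernel of $D''$ on $\Om^0(L^\ast)$ to be trivial, $\xi$ is itself uniquely determined and depends smoothly on $(u,[\eta])$ by inversion of $D'_uD''$ on the $H_u$-orthogonal complement of the harmonic subspace. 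With this substitution the system is equivalent to the scalar PDE
\[
F(u,[\eta],\lb)\ :=\ \Delta u+2\,|A_u|^2_{H_u}-\lb\ =\ 0,\qquad |A_u|^2_{H_u}=|A_u|^2_0\,e^{-2u}.
\]
Integrating over $S^2$ gives $\int|A_u|^2_{H_u}\,\nu=\lb/2$ for any solution, so the hypothesis $\lb_0>0$ forces $[\eta_0]\neq0$ and hence $A_{u_0}\not\equiv0$.

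For uniqueness, suppose $u_1$ and $u_2$ both solve $F(\cdot,[\eta_0],\lb_0)=0$. Setting $u_t=(1-t)u_2+tu_1$ and subtracting the two equations, the fundamental theorem of calculus yields
\[
\Delta(u_1-u_2)\ +\ 2\int_0^1\!\frac{d}{dt}\bigl(|A_{u_t}|^2_{H_{u_t}}\bigr)\,dt\ =\ 0.
\]
The pointwise derivative of $|A_{u_t}|^2_{H_{u_t}}$ in $t$ decomposes into a direct piece $-2|A_{u_t}|^2_{H_{u_t}}(u_1-u_2)$ coming from the factor $e^{-2u_t}$ and a gauge piece coming from $\dot\xi_{u_t}$; using $D'_{u_t}A_{u_t}=0$ together with an integration by parts arising from a Kähler identity, one checks that the gauge piece also has the same sign as $-(u_1-u_2)$. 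Hence $w=u_1-u_2$ satisfies $\Delta w-q(x)w=0$ with $q\geq0$ and $q\not\equiv0$, and the maximum principle forces $w\equiv0$. Equivalently, the argument reflects strict convexity in $u$ of the energy functional whose Euler--Lagrange equation is $F=0$.

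For smooth local dependence on the parameters I apply the implicit function theorem on $C^{2,\alpha}(S^2)\to C^{0,\alpha}(S^2)$. The linearization of $F$ in $u$ at $u_0$ is the operator $Lv=\Delta v-q(x)v$ with the same non-negative, non-trivial $q$ computed above, so $L$ is self-adjoint and negative definite, and therefore a topological isomorphism between these Hölder spaces. Smoothness of $F$ in $([\eta],\lb)$ is inherited from smoothness of the $H_u$-Hodge projection in its arguments. The IFT then produces the desired smooth map $u([\eta],\lb)$ on a neighborhood of $([\eta_0],\lb_0)$, and elliptic regularity upgrades it to a map into $C^\infty(S^2)$. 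The main obstacle is the verification that the gauge contribution to $q$ has the correct sign: this requires exploiting $D'_uA_u=0$ through an integration by parts involving a Kähler identity, and it is the only non-routine step; once established, both assertions of the lemma follow from standard elliptic PDE theory on the sphere.
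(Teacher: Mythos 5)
The paper itself offers no proof of this lemma; it is imported verbatim from \cite{Go2} (Theorem 4.7 and Corollary 4.9), so your argument can only be judged on its own merits. Your reduction is sound: since $\Deg(L^\ast)<0$ the operator $D''$ has trivial kernel on $\Om^0(L^\ast)$, so the second equation of \eqref{system-eta} determines $\xi$ uniquely as the potential making $A_u=\eta+D''\xi$ the $H_u$-harmonic representative of $[\eta]$, the system collapses to the scalar equation you write, and integrating it does force $[\eta_0]\neq0$.

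The gap is in the sign of the ``gauge piece,'' and it is fatal to the argument as written. First, $\dot A_{u_t}=D''\dot\xi_t$ is determined by a global elliptic equation, so the linearization is not of the local form $\Delta-q(x)$ with $q\geq0$ and the maximum principle is inapplicable regardless of sign. Second, and worse, the sign is unfavorable. Differentiating the harmonicity condition $\langle A_{u_t},D''\sigma\rangle_{L^2(H_{u_t})}=0$ in $t$ and taking $\sigma=\dot\xi_t$ yields
\begin{equation*}
\int_{S^2}w\,\langle D''\dot\xi_t,A_{u_t}\rangle_{H_{u_t}}\,\nu=\tfrac12\,\|D''\dot\xi_t\|^2_{L^2(H_{u_t})}\ \geq\ 0,
\end{equation*}
and writing $wA_{u_t}=h+D''\tau$ for the $L^2(H_{u_t})$-Hodge decomposition (so that $D''\dot\xi_t=2D''\tau$), the quadratic form of the linearization along your segment $u_t=(1-t)u_2+tu_1$ becomes
\begin{equation*}
\int_{S^2}w\,L_{u_t}w\;=\;-\int_{S^2}|\nabla w|^2\;-\;4\|h\|^2\;+\;4\|D''\tau\|^2 .
\end{equation*}
The contribution of the moving harmonic representative enters with a \emph{plus} sign and works against negative definiteness, so your assertion that it ``has the same sign as $-(u_1-u_2)$'' is not a routine verification but is false as stated; with it both the uniqueness argument and the invertibility of the linearization needed for the implicit function theorem collapse. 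Showing that this quadratic form is nevertheless negative definite at (and between) solutions is precisely the nontrivial content of \cite{Go2}; it cannot be extracted from $D'_uA_u=0$ and one integration by parts alone, and in the holomorphic-triples literature the uniqueness is instead obtained from convexity of a Donaldson-type functional along geodesics in the full space of metrics on the rank-two bundle $E$, paths which do not project to the linear segments in $u$ that you use. Note also that your argument nowhere uses the hypothesis $\lb_0<2\pi\,\Deg(L)$, which is a warning sign: the positive term above is exactly the mechanism by which invertibility can fail outside the admissible range of $\lb$.
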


\begin{lem}\label{existence-terms-diveta}
Let $[\eta]\in\ha-\{0\}$. Then $0<4\pi({\rm deg}(L_2)-{\rm div}[\eta])\leq2\pi\,{\rm deg}(L)$. System 
\eqref{system-eta} has a solution for any $\lb\in(0,4\pi({\rm deg}(L_2)-{\rm div}[\eta]))$ and no solution
for $\lb\in[4\pi\,({\rm deg}(L_2)-{\rm div}[\eta]),2\pi\,{\rm deg}(L))$.
\end{lem}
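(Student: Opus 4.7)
The plan is to apply Theorem \ref{estavel} after translating the $\alpha$-stability condition into an equivalent range for $\lb$, relying on the bijective correspondence between a metric solution $H_E$ of \eqref{hol-ext} and a solution $(u,\xi)$ of \eqref{system-eta} through $\lb=2\pi\Deg(L)+\tau_1-\tau_2$.

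First I would rewrite $\alpha$-stability purely in terms of $\lb$. Combining $\tau_1+\tau_2=2\pi(\Deg(L_1)+\Deg(L_2))$ with $\lb=2\pi\Deg(L)+\tau_1-\tau_2$ gives $\alpha=\lb/(2\pi)-\Deg(L)$. Substituting this into $\Deg(L_1)-\Deg(L_2)<\alpha<\Deg(L_1)+\Deg(L_2)-2\Div[\eta]$ and using $\Deg(L)=\Deg(L_2)-\Deg(L_1)$, the two bounds simplify exactly to $\lb>0$ and $\lb<4\pi(\Deg(L_2)-\Div[\eta])$ respectively; the auxiliary sign condition $\alpha<0$ in Theorem \ref{estavel} reads simply $\lb<2\pi\Deg(L)$.

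Next I would verify the numerical sandwich $0<4\pi(\Deg(L_2)-\Div[\eta])\leq 2\pi\Deg(L)$. The strict positivity follows from Lemma \ref{div-eta<=degL2}: since $[\eta]\neq 0$ and one may assume $\Deg(L_2)>\Deg(L_1)$ (otherwise the interval $(0,2\pi\Deg(L))$ is empty and there is nothing to prove), the equality case of that lemma is excluded and $\Div[\eta]<\Deg(L_2)$. For the upper bound I would invoke Lemma \ref{facts_on-sphere}(b) to split the middle term of the extension holomorphically as $E=J_1\oplus J_2$; because $\Deg(J_1)+\Deg(J_2)=\Deg(E)=\Deg(L_1)+\Deg(L_2)$, one of the $J_i$ has degree at least $(\Deg(L_1)+\Deg(L_2))/2$, hence $\Div[\eta]\geq(\Deg(L_1)+\Deg(L_2))/2$, which rearranges to the desired inequality.

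Theorem \ref{estavel} then gives a metric solution $H_E$ (hence a solution to \eqref{system-eta}) precisely when $\lb$ lies in the $\alpha$-stable range translated above, namely $0<\lb<4\pi(\Deg(L_2)-\Div[\eta])$. For $\lb\in[4\pi(\Deg(L_2)-\Div[\eta]),2\pi\Deg(L))$ the corresponding $\alpha$ still satisfies $\alpha<0$ but fails the upper strict inequality of $\alpha$-stability, so Theorem \ref{estavel} rules out existence. The main subtlety of the argument is the estimate $\Div[\eta]\geq(\Deg(L_1)+\Deg(L_2))/2$: it is specific to the base $M=S^2$ through the Grothendieck-type splitting in Lemma \ref{facts_on-sphere}(b) and would generically fail over a higher-genus surface.
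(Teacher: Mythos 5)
Your proposal is correct and follows essentially the same route as the paper: strict positivity of $4\pi(\Deg(L_2)-\Div[\eta])$ from Lemma \ref{div-eta<=degL2}, the upper bound via the holomorphic splitting of $E$ from Lemma \ref{facts_on-sphere}(b) giving $\Div[\eta]\geq\tfrac12(\Deg(L_1)+\Deg(L_2))$, and the existence/non-existence dichotomy by translating $\alpha$-stability into the $\lb$-range and applying Theorem \ref{estavel}. Your explicit computation $\alpha=\lb/(2\pi)-\Deg(L)$ just spells out what the paper leaves as "the $\alpha$-stability condition restated for the parameter $\lb$."
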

\begin{proof}
Inequality $0<4\pi(\Deg(L_2)-\Div[\eta])$ holds because of $[\eta]\neq0$ and Lemma \ref{div-eta<=degL2}. 
For the other inequality we pick an extension $0\to L_1\to E\to L_2\to0$ representing $[\eta]$. From lemma
\ref{facts_on-sphere} part (b) we have $E=\tilde{L_1}\oplus\tilde{L_2}$ holomorphically, for some
line bundles $\tilde{L_1},\tilde{L_2}$ with $\Deg(\tilde{L_2})\geq\Deg(\tilde{L_1})$. The general theory of
Chern classes gives us $\Deg(E)=\Deg(L_1)+\Deg(L_2)=\Deg(\tilde{L_1})+\Deg(\tilde{L_2})$. Clearly
$\Div[\eta]=\Deg(\tilde{L_2})\geq1/2(\Deg(L_1)+\Deg(L_2))$, hence $4\pi(\Deg(L_2)-\Div[\eta])\leq2\pi\Deg(L)$.

The second assertion of the Lemma comes from the $\alpha$-stability condition restated for the parameter 
$\lb$ combined with Theorem \ref{estavel} and the subsequent discussion.  
\end{proof}

\section{The Space of Extensions over $S^2$.}

After the results in the previous section it becomes relevant to understand the space $\ha$,
which is non-trivial because of \eqref{degL>eulerM}. 

Let $[\eta]\in\ha$ and $H$ be a metric. Let $\eta$ be the $H$-antiholomorphic representative for $[\eta]$.
We aim to compute $\Div[\eta]$. For that sake consider an extension $0\to L_1\to E\to L_2\to 0$ for
$[\eta]$. After Lemma \ref{cohom-ext-corresp} we can take $E=L_1\oplus_{top}L_2$ with holomorphic operator 
$D''_\eta$, where 
\begin{equation}
D_\eta''=\left(\begin{array}{cc} D_1'' & \eta \\ 0 & D_2''\end{array}\right).
\end{equation}
We denote the cannonical meromorphic section of $L_j$ by $\zeta_j$,
$j=1,2$. The investigation of the meromorphic sections of $E$ starts with the
\begin{lem}\label{meromorphic-on-E}
There is a smooth $f:S^2\to\C$ such that $\psi=(f\zeta_1,\zeta_2)$ is a meromorphic section of $E$.
If $J\subset E$ is any line bundle not equal to $L_1$ then there is a meromorphic function $h$ on
$S^2$ such that $\tilde\psi=((f+h)\zeta_1,\zeta_2)$ is meromorphic and spans $J$.
\end{lem}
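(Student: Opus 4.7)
The plan is to reduce the first claim to a $\bar\partial$-equation on $S^2$. In the topological splitting $E=L_1\oplus_{\rm top}L_2$ one has $D_\eta''(s_1,s_2)=(D_1''s_1+\eta\, s_2,\,D_2''s_2)$, and $\zeta_2$ satisfies $D_2''\zeta_2=0$ on $S^2-\{N\}$. Since $\zeta_1$ is a holomorphic trivialization of $L_1$ on $S^2-\{N\}$, the pair $\psi=(f\zeta_1,\zeta_2)$ is $D_\eta''$-holomorphic off $N$ if and only if $\bar\partial f=\alpha$, where $\alpha$ is the scalar $(0,1)$-form defined on $S^2-\{N\}$ by $\alpha\,\zeta_1=-\eta\,\zeta_2$.

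The main technical point is to show that $\alpha$ extends smoothly across $N$. Using the coordinate $w=1/z$ together with the regular trivializations $\zeta_j=w^{\Deg(L_j)}\zeta_{j,S}$ and the local expression $\eta=\tilde\eta_N\,(\zeta_{1,S}\otimes\zeta_{2,S}^{-1})\,d\bar w$ for a smooth function $\tilde\eta_N$, a direct substitution gives
\begin{equation*}
\alpha=-\tilde\eta_N\,w^{\Deg(L)}\,d\bar w\qquad\text{near }N.
\end{equation*}
The standing hypothesis \eqref{degL>eulerM} on $S^2$ reads $\Deg(L)\geq 2$, so $w^{\Deg(L)}$ is smooth at $w=0$ and hence $\alpha$ is a smooth global $(0,1)$-form on $S^2$. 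Because $H^1(S^2,\mathcal{O})=0$, the equation $\bar\partial f=\alpha$ admits a smooth global solution $f\in C^\infty(S^2)$, and the resulting $\psi=(f\zeta_1,\zeta_2)$ is $D_\eta''$-holomorphic away from $N$ with at worst a pole at $N$ inherited from $\zeta_1$ and $\zeta_2$, hence meromorphic.

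For the second assertion, take a non-trivial meromorphic section $\psi_J=(\sigma_1,\sigma_2)$ of the line bundle $J$; if $J\neq L_1$ then $\pi|_J:J\to L_2$ is a non-zero holomorphic morphism (otherwise $J\subset\ker\pi=L_1$), so $\sigma_2=\pi(\psi_J)\not\equiv 0$. The ratio $g=\zeta_2/\sigma_2$ is a meromorphic function on $S^2$, and $g\psi_J=(g\sigma_1,\zeta_2)$ is a meromorphic section of $J$ that still spans it. Comparing with $\psi$, the difference $\psi-g\psi_J=(f\zeta_1-g\sigma_1,\,0)$ is a meromorphic section of $E$ whose second component vanishes, so it lies in the holomorphic subbundle $L_1$ and must be of the form $k\zeta_1$ for some meromorphic function $k$ on $S^2$. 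Setting $h=-k$ gives $g\psi_J=((f+h)\zeta_1,\zeta_2)$, a meromorphic section of $E$ spanning $J$, which is the claim. The crux of the whole argument is therefore the local regularity of $\alpha$ at $N$, controlled by $\Deg(L)\geq 2$; once this is in place the $\bar\partial$-equation is solved by Dolbeault vanishing and the second part is a scaling-and-comparison bookkeeping.
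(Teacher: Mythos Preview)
Your argument is correct and follows the same conceptual route as the paper: reduce the holomorphicity of $(f\zeta_1,\zeta_2)$ to a scalar $\bar\partial$-equation for $f$, solve it, then for the second assertion normalize an arbitrary meromorphic section of $J$ so that its $L_2$-component is $\zeta_2$ and observe that the difference with $\psi$ lands in $L_1$.

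The only genuine difference is how the $\bar\partial$-equation is solved. You invoke $H^{0,1}(S^2,\mathcal O)=0$ abstractly, after checking directly in the $w$-trivialization that $\alpha=-\tilde\eta_N\,w^{\Deg(L)}d\bar w$ is smooth at $N$. The paper instead appeals to Lemma~\ref{sol_delbar-eq}, which produces the \emph{specific} solution $f$ via the Cauchy--Pompeiu integral \eqref{f-int-sphere}, normalized by $f(N)=0$, together with its asymptotic expansion $f={\cal O}-p_f$ near $N$. For the bare statement of Lemma~\ref{meromorphic-on-E} your abstract route is perfectly adequate (and arguably cleaner); the paper's explicit construction is there because the coefficients $b_j$ of $p_f$ and the bound $|{\cal O}(w)|\leq C|w|^k$ are exactly what drive the computation of $\ord_N(\tilde\psi)$ in \eqref{ord-psitil-2}--\eqref{deg-psitil-form} and Lemma~\ref{calc-deg}. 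So your proof establishes the lemma, but if you continue along the paper's path you will still need to pin down the particular $f$ with $f(N)=0$ and its expansion, which is precisely the content of Lemma~\ref{sol_delbar-eq}.

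One small point worth making explicit in your write-up: the sentence ``with at worst a pole at $N$ inherited from $\zeta_1$ and $\zeta_2$, hence meromorphic'' is correct but compressed. The clean way to see it is that $w^{-\Deg(L_1)}\psi=(f\,\zeta_{1,S},\,w^{\Deg(L)}\zeta_{2,S})$ is a smooth section near $N$ (since $f$ and $w^{\Deg(L)}$ are smooth there) and is $D_\eta''$-closed by the same computation you already did, so $\psi$ is $w^{\Deg(L_1)}$ times a regular holomorphic section near $N$.
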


In the sequel we will write $k=\Deg(L)$. The proof of Lemma \ref{meromorphic-on-E} will come straight after the next result. 
\begin{lem}\label{sol_delbar-eq}
Write $\eta=\ov g\,\zeta_L^{\ast H}d\ov z$. Then there is a single $f\in C^\infty(S^2)$ with values in $\C$, 
and such that 
\begin{equation}\label{delzbar-f=}
\pb_z f=-\ov g\,|\zeta_L|_H^2
\end{equation}
and $f(N)=0$, $N$ the north pole. This function can be written as $f={\cal O}-p_f$ in a neighborhood
of $N$, with $p_f$ a polynomial in $w=\frac 1z$ of degree smaller than $k$ and ${\cal O}$ is a local
smooth function such that $|{\cal O}(w)|\leq C|w|^k$, for some constant $C>0$. 
Still, $p_f(w)=\sum_{j=1}^{k-1}b_jw^j$ and
\begin{equation}\label{bj-coeff}
  b_j=\int_{S^2}\frac{|\eta|_H^2}g\,w^{-j+1}\,\nu(w), \qquad 1\leq j\leq k-1.
\end{equation}
\end{lem}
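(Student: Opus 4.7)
The strategy is to recast the pointwise equation \eqref{delzbar-f=} as a $\pb$-equation on the compact manifold $S^2$, invoke Dolbeault vanishing for existence, and then extract the local expansion at $N$ via a Taylor argument combined with a Stokes-type residue calculation.

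First I would view the equation as $\pb f = \omega$, where $\omega := -\bar g\,|\zeta_L|_H^2\,d\bar z$, a priori defined only on $S^2-\{N\}$. I would check that $\omega$ extends smoothly across $N$ by converting to the $w=1/z$ chart. Writing $\bar g = \sum_{i=0}^{k-2}\bar c_i\,\bar w^{-i}$ (since $g$ is a polynomial in $z$ of degree $\leq k-2$), and using $d\bar z = -\bar w^{-2}\,d\bar w$ together with $|\zeta_L|_H^2 = |w|^{2k}|\zeta_{L,S}|_H^2$, a direct calculation gives
\begin{equation*}
\omega \;=\; w^k\,\Big(\sum_{i=0}^{k-2}\bar c_i\,\bar w^{k-2-i}\Big)|\zeta_{L,S}|_H^2\,d\bar w,
\end{equation*}
which is smooth at $w=0$ and vanishes to order $k$ in $w$. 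Since $H^{0,1}(S^2,\mathcal{O})=0$, there is a smooth $f_0:S^2\to\C$ with $\pb f_0 = \omega$; any two such solutions differ by a global holomorphic (hence constant) function, so the normalization $f(N)=0$ picks out the unique $f$ claimed.

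For the local expansion at $N$, Taylor-expand $f(w,\bar w)=\sum_{m,n\geq 0}a_{m,n}\,w^m\bar w^n$. Applying $\partial_{\bar w}$ and matching against the $w^k$-vanishing of the right-hand side computed above forces $a_{m,n}=0$ whenever $m<k$ and $n\geq 1$. Consequently
\begin{equation*}
f(w,\bar w) \;=\; \sum_{m=0}^{k-1}a_{m,0}\,w^m \;+\; \mathcal{O}(w,\bar w),\qquad |\mathcal{O}(w,\bar w)|\leq C|w|^k,
\end{equation*}
where the remainder estimate follows from Taylor's theorem applied to the smooth $f$. Using $a_{0,0}=f(N)=0$ and setting $b_j := -a_{j,0}$ for $1\leq j\leq k-1$ produces $p_f(w)=\sum_{j=1}^{k-1}b_j w^j$ and $f=\mathcal{O}-p_f$.

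The integral formula for $b_j$ is obtained by extracting $a_{j,0}$ as a residue and then using Stokes' theorem. The vanishing of $a_{m,n}$ for $m<k,\ n\geq 1$ yields
\begin{equation*}
a_{j,0} \;=\; \lim_{\epsilon\to 0}\frac{1}{2\pi i}\oint_{|w|=\epsilon}f(w,\bar w)\,\frac{dw}{w^{j+1}}.
\end{equation*}
Converting this loop to $z$-coordinates (where it becomes $|z|=1/\epsilon$ traversed counterclockwise, since $z=1/w$ reverses orientation) and applying Stokes' theorem to $S^2\setminus D_\epsilon$ with $\pb_z f = -\bar g\,|\zeta_L|_H^2$ transforms it into
\begin{equation*}
b_j \;=\; -a_{j,0} \;=\; \frac{i}{2\pi}\int_{S^2}\bar g\,|\zeta_L|_H^2\,z^{j-1}\,dz\wedge d\bar z,
\end{equation*}
the integrability at $N$ being guaranteed by the same $w^k$-vanishing of $\omega$ established at the outset. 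Finally, substituting $z^{j-1}=w^{-j+1}$, using $\bar g\,|\zeta_L|_H^2\,|dz|^2 = |\eta|_H^2/g$ from \eqref{eta-Hnorm}, and matching $(i/2\pi)\,dz\wedge d\bar z$ with the chosen normalization of $\nu$ recovers the stated formula. The main obstacle is the careful passage from the residue extraction, which is intrinsically a computation local to $N$, to a Stokes integral over all of $S^2$; this hinges on the global smoothness and $w^k$-vanishing of $\omega$ already pinned down in the first step.
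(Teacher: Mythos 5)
Your proof is correct, but it takes a genuinely different route from the paper's. The paper constructs $f$ explicitly by the Cauchy-kernel formula $f(z)=\int_{S^2}h(z')/(z-z')\,\nu(z')$ (a spherical version of the $\pb$-Poincar\'e lemma), and then obtains the polynomial $p_f$ and the bound $|{\cal O}(w)|\le C|w|^k$ simultaneously by expanding $1/(w-w')$ as a geometric series and estimating the three resulting terms $T_1,T_2,T_3$ one by one; that estimation occupies most of the printed proof. You instead (i) obtain existence abstractly from $H^{0,1}(S^2,\mathcal{O})=0$ after checking that the datum extends across $N$ as a smooth $(0,1)$-form divisible by $w^k$; (ii) read off the local structure from Taylor's theorem: since the formal Taylor series of $\pa_{\ov w}f$ at $N$ contains only monomials of $w$-degree at least $k$, all mixed coefficients $a_{m,n}$ with $m<k$, $n\ge1$ vanish, so the degree-$(k-1)$ Taylor polynomial of $f$ is a holomorphic polynomial and the remainder is $O(|w|^k)$; and (iii) recover the coefficients by a residue extraction followed by Stokes. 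This decouples existence, local structure and the coefficient formula, and replaces the series manipulation by the standard Taylor remainder bound; what it gives up is the explicit integral representation \eqref{f-exp-eta} of $f$, which the paper does not in fact use outside this proof. Two small points to tidy: the parenthetical claim that $z=1/w$ ``reverses orientation'' is not the right justification (the map is holomorphic; the sign comes from the boundaries of $\{|w|<\epsilon\}$ and of its complement carrying opposite orientations --- your final formula is nonetheless correct), and the last step matching $\frac i{2\pi}\,dz\wedge d\ov z$ against $\nu$ and $|dz|^2$ deserves to be written out, since the normalization $|S^2|=1$ hides a factor of $\pi$ in each of $\nu$ and $|dz|^2$ that must cancel.
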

\begin{proof}
Let $h\in C^\infty(S^2)$. Define a function $f:S^2-\{N\}\to\C$ by
\begin{equation}\label{f-int-sphere}
  f(z)=\int_{S^2}\frac{h(z')}{z-z'}\,\nu(z').
\end{equation}
We claim that $f$ is actually defined in the whole of $S^2$, is $C^\infty$  and it holds $\pb_z f(z)=h(z)/(1+|z|^2)^2$
away from the north pole.
Indeed, we can rewrite the integral in \eqref{f-int-sphere} as an integral in the plane
\begin{equation}\label{f-int-plane}
f(z)=\frac1{2\pi i}\int_{\R^2}\frac{h(z')}{(1+|z'|^2)^2(z'-z)}\,dz'\wedge d\ov z'.
\end{equation}
Recall that by the $\ov\partial$-Poincar\'e Lemma \cite{GH} the integral on the right-hand side of \eqref{f-int-plane}
is a function in the parameter $z$ whose $\pb_z$-derivative equals $h(z)/(1+|z|^2)^2$. In spite of the integral
in \cite{GH} be performed in a bounded region of the plane the argument for the derivative requires a local
computation and still holds in our case. Finally, changing in \eqref{f-int-sphere} the coordinate $z'$ by
$w'=1/z'$ we get
\begin{equation}
  f(w)=\int_{S^2}\frac{h(w')}{w'-w}ww'\,\nu(w'),
\end{equation}
from what we can see $f$ is well defined and smooth in $N$ $(w=0)$, as well as $f(N)=0$. If $\tilde f$ is
any function smooth on $S^2$ and such that $\pb_z\tilde f=\pb_z f$ we have $\tilde f-f$ holomorphic in
$S^2-\{N\}$, and hence constant because $\tilde f-f$ is bounded. We conclude there is exactly one $f$ 
satisfying $\pb_z f(z)=h(z)/(1+|z|^2)^2$.

Clearly the first part of the Lemma follows if we take $h(z)=-\ov g\,|\zeta_L|_H^2(1+|z|^2)^2$, observing
that this choice makes $h$ smooth: $h(z)=-g(z)^{-1}|\eta(z)|_H^2=-\ov{g(z)}|\zeta_{L\otimes T^{1,0}S^2}(z)|_H^2$.
The function $f$ given by \eqref{f-int-sphere} becomes
\begin{equation}\label{f-exp-eta}
  f(w)=\int_{S^2}\frac{|\eta|_H^2}g\,\frac{ww'}{w-w'}\,\nu(w').
\end{equation}

Using identity \eqref{eta-Hnorm} we see that the integrands on \eqref{bj-coeff} can be written, 
in each trivialization $\zeta_L$ or $\zeta_{L,S}$, as
\begin{equation*}
\frac{|\eta|_H^2}g\,w^{-j+1}=\ov g|\zeta_L|_H^2|dz|^2w^{-j+1}=(\ov g\,\ov w^{k-2})w^{k-1-j}|\zeta_{L,S}|_H^2|dw|^2.
\end{equation*}
The second term of the above equation is bounded for $|w|\geq1$ while the third one is bounded for $|w|\leq1$.
From this we check that the coefficients given by \eqref{bj-coeff} are well-defined. 

The last part of the Lemma will be proved using the usual trick on managing expansions for the function $1/(w'-w)$.
Fix $w\neq\infty$ and compute from equation \eqref{f-exp-eta}
\begin{equation}\label{expand-f(w)}\begin{aligned}
f(w)&=\intl_{S^2}\frac{|\eta|_H^2}g\,\frac{ww'}{w-w'}\,\nu(w')=\\
&=\!\!\!\!\intl_{|w'|<2|w|}\!\!\!\!\frac{|\eta|_H^2}g\,\frac{ww'}{w-w'}\,\nu(w')+\!\!\!\intl_{|w'|\geq2|w|}\!\!\!
\frac{|\eta|_H^2}g\,\frac{ww'}{w-w'}\,\nu(w')=\\
&=\!\!\!\!\intl_{|w'|<2|w|}\!\!\!\!\frac{|\eta|_H^2}g\,\frac{ww'}{w-w'}\,\nu(w') -\!\!\!\intl_{|w'|\geq2|w|}\!\!\!
\frac{|\eta|_H^2}g\left(\sum_{m=0}^\infty\frac{w^{m+1}}{(w')^m}\right)\nu(w').\\
\end{aligned}\end{equation}
The power series appearing in the above equation converges absolutely, and the respective integral can be written
\begin{equation}\label{exp-pf}\begin{aligned}
\!\!\!\intl_{|w'|\geq2|w|}&\!\!\!\frac{|\eta|_H^2}g\left(\sum_{m=0}^\infty\frac{w^{m+1}}{(w')^m}\right)\nu(w')=
\sum_{m=0}^{k-2}w^{m+1}\!\!\!\!\!\!\intl_{|w'|\geq2|w|}\!\!\!\frac{|\eta|_H^2}g(w')^{-m}\,\nu(w')\,+\\
&\qquad+\sum_{m=k-1}^\infty w^{m+1}\!\!\!\!\!\!\intl_{|w'|\geq2|w|}\!\!\!\frac{|\eta|_H^2}g(w')^{-m}\,\nu(w')=\\
=&\sum_{j=1}^{k-1}w^j\!\!\intl_{S^2}\frac{|\eta|_H^2}g(w')^{-j+1}\,\nu(w')-
\sum_{m=0}^{k-2}w^{m+1}\!\!\!\!\!\!\intl_{|w'|<2|w|}\!\!\!\frac{|\eta|_H^2}g(w')^{-m}\,\nu(w')+\\
&+\sum_{m=k-1}^\infty w^{m+1}\!\!\!\!\!\!\intl_{|w'|\geq2|w|}\!\!\!\frac{|\eta|_H^2}g(w')^{-m}\,\nu(w').
\end{aligned}\end{equation}

Let $p_f(w)$ be the polynomial given by the Lemma, and ${\cal O}=f+p_f$. Combining \eqref{expand-f(w)} and
\eqref{exp-pf} we obtain
\begin{equation}\label{v.ord.O}\begin{aligned}
{\cal O}&(w)=f(w)+p_f(w)=\!\!\!\!\intl_{|w'|<2|w|}\!\!\!\!\frac{|\eta|_H^2}g\,\frac{ww'}{w-w'}\,\nu(w')-\\
&-\sum_{j=1}^{k-1}w^j\!\!\intl_{S^2}\frac{|\eta|_H^2}g(w')^{-j+1}\,\nu(w')+
\sum_{m=0}^{k-2}w^{m+1}\!\!\!\!\!\!\intl_{|w'|<2|w|}\!\!\!\frac{|\eta|_H^2}g(w')^{-m}\,\nu(w')-\\
&-\sum_{m=k-1}^\infty w^{m+1}\!\!\!\!\!\!\intl_{|w'|\geq2|w|}\!\!\!\frac{|\eta|_H^2}g(w')^{-m}\,\nu(w')
+p_f(w).
\end{aligned}\end{equation}

The second and fifth terms of the last member of \eqref{v.ord.O} cancell out. We end up with
\begin{equation}\begin{aligned}
{\cal O}&(w)=\!\!\!\!\intl_{|w'|<2|w|}\!\!\!\!\frac{|\eta|_H^2}g\,\frac{ww'}{w-w'}\,\nu(w')+
\sum_{m=0}^{k-2}w^{m+1}\!\!\!\!\!\!\intl_{|w'|<2|w|}\!\!\!\frac{|\eta|_H^2}g(w')^{-m}\,\nu(w')-\\
&-\!\!\!\sum_{m=k-1}^\infty w^{m+1}\!\!\!\!\!\!\intl_{|w'|\geq2|w|}\!\!\!\frac{|\eta|_H^2}g(w')^{-m}\,\nu(w')
=T_1+T_2-T_3.
\end{aligned}\end{equation}

To finish the proof we will show that an estimate of the form $|T_m|\leq C|w|^k$ holds, for $m=1,2,3$. 
We can assume $|w|\leq1$ in the computations. 
As usual in this kind of argument we denote by $C(\cdot)$ a positive parameter that depends only
on the terms inside parenthesis. Different occurrences of $C$ may mean different ``constants''.

Replacing $w$ by $w'$ in equation \eqref{eta-Hnorm} we get
\begin{equation}\label{estim-w<1}
  \frac{|\eta|_H^2}g=(w')^{k-2}(\ov g\,(\ov w')^{k-2})|\zeta_{L,S}|_H^2|dw'|^2=(w')^{k-2}\,M_H(w'),
\end{equation}
where $M_H(w')$ remains bounded if $|w'|\leq2$. Thus
\begin{equation}\label{est-T1}\begin{aligned}
|T_1|&\leq\!\!\!\!\intl_{|w'|<2|w|}\!\!\!\!\frac{|\eta|_H^2}{|g|}\,\left|\frac{ww'}{w-w'}\right|\,\nu(w')\\
&\leq\!\!\!\!\intl_{|w'|<2|w|}\!\!\!\! |w'|^{k-2}\,|M_H(w')|\frac{|ww'|}{|w-w'|}\,\nu(w')\\
&<|w|^k2^{k-1}\|M_H\|_{L^\infty(|w'|<2)}\intl_{|w'|<2|w|}\!\!\!\!\frac{\nu(w')}{|w-w'|}.
\end{aligned}\end{equation}
An easy estimate shows that 
\begin{equation}\label{est-1/w-w'}
\intl_{|w'|<2|w|}\!\!\!\!\frac{\nu(w')}{|w-w'|}<C\qquad\text{if}\ |w|\leq1.
\end{equation}
Hence from \eqref{est-T1} and \eqref{est-1/w-w'} we obtain
\begin{equation}\label{T1-estimado}
  |T_1|\leq C(H)\,|w|^k\qquad\text{if}\ |w|\leq1.
\end{equation}

The estimate for $T_2$ follows a similar line to $T_1$:
\begin{equation}\label{est-T2-parc}\begin{aligned}
|T_2|&=\left|\sum_{m=0}^{k-2}w^{m+1}\!\!\!\!\!\!\intl_{|w'|<2|w|}\!\!\!\frac{|\eta|_H^2}g(w')^{-m}\,\nu(w')\right|\\
&\leq\sum_{m=0}^{k-2}|w|^{m+1}\!\!\!\!\!\!\intl_{|w'|<2|w|}\!\!\! |M_H(w')|\,|w'|^{k-2}|w'|^{-m}\,\nu(w')\\
&\leq\sum_{m=0}^{k-2}|w|^{m+1}|2w|^{k-2-m}\|M_H\|_{L^\infty(|w'|<2)}\!\!\!\intl_{|w'|<2|w|}\!\!\! \nu(w')\\
&\leq C(H)|w|^{k-1}\!\!\!\intl_{|w'|<2|w|}\!\!\! \nu(w').
\end{aligned}\end{equation}
The integral in the last member of \eqref{est-T2-parc} is the area of a geodesic disc of radius $R$. This disk
has area smaller than its image under the conformal mapping $w':S^2-\{S\}\to\R^2$, so
$\!\!\!\intl_{|w'|<2|w|}\!\!\! \nu(w')<C\,|w|^2$. We get
\begin{equation}\label{T2-estimado}
  |T_2|\leq C(H)|w|^{k+1}.
\end{equation}

Finally we estimate $T_3$.
\begin{equation}\begin{aligned}
 T_3&=\!\!\!\sum_{m=k-1}^\infty w^{m+1}\!\!\!\!\!\!\intl_{|w'|\geq2|w|}\!\!\!\!\!\!\frac{|\eta|_H^2}g(w')^{-m}\,\nu(w')=
w\!\!\!\!\!\!\!\intl_{|w'|\geq2|w|}\!\!\!\!\!\!\!\frac{|\eta|_H^2}g\!\!\sum_{m=k-1}^\infty\!\!\left(\frac w{w'}\right)^m
\nu(w')=\\
&\quad=w\!\!\!\!\!\!\!\intl_{|w'|\geq2|w|}\!\!\!\!\!\frac{|\eta|_H^2}{g(w')^{k-1}}\,w^{k-1}\!\!\sum_{m=0}^\infty
\left(\frac w{w'}\right)^m\,\nu(w').
\end{aligned}\end{equation}
Therefore,
\begin{equation}\label{est-T3}
|T_3|\leq 2|w|^k\!\!\!\!\!\!\!\intl_{|w'|\geq2|w|}\!\!\!\!\!\frac{|\eta|_H^2}{|g||w'|^{k-1}}\,\nu(w')<
C\,|w|^k\!\intl_{S^2}\frac{|\eta|_H^2}{|g||w'|^{k-1}}\,\nu(w').
\end{equation}
The integral in $S^2$ can be split into integrals in the north and south hemispheres. The first of them satisfies
\begin{equation}\label{north-int-est}
\!\!\!\!\!\!\!\intl_{|w'|\leq1}\!\!\!\!\!\frac{|\eta|_H^2}{|g||w'|^{k-1}}\,\nu(w')\,\,<
\!\!\intl_{|w'|\leq1}\!\!\!\!\!\frac{|M_H(w')|}{|w'|}\,\nu(w')\leq C\,\|M_H\|_{L^\infty(|w'|\leq1)}.
\end{equation}
The south hemisphere integral is estimated as
\begin{equation}\label{south-int-est}\begin{aligned}
 \!\!\!\!\!\!\!\intl_{|w'|>1}\!\!\!\!\!&\frac{|\eta|_H^2}{|g||w'|^{k-1}}\,\nu(w')\,\,<
\!\!\!\intl_{|w'|>1}\!\!\!\!\!\frac{|\eta|_H^2}{|g|}\,\nu(w')\,=
\!\!\!\intl_{|w'|>1}\!\!\!\!\!|g||\zeta_L|_H^2|dz|^2\,\nu(w')\leq\\
&\leq C\,\|g\|_{L^\infty(|w'|>1)}\|\zeta_L\|_{L^\infty_H(|w'|>1)}^2\|dz\|_{L^\infty(|w'|>1)}^2.
\end{aligned}\end{equation}
From \eqref{est-T3}, \eqref{north-int-est} and \eqref{south-int-est} we obtain
\begin{equation}\label{T3-estimado}
  |T_3|\leq C(H)|w|^k.
\end{equation}

Altogether inequalities \eqref{T1-estimado}, \eqref{T2-estimado} and \eqref{T3-estimado} imply 
$|{\cal O}(w)|\leq C|w|^k$. This completes with the Lemma's proof.
\end{proof}

\begin{proof}[Proof of Lemma \ref{meromorphic-on-E}]
Let's first assume \emph{there is} some meromorphic section $\psi$ on $E$, not contained in $L_1$.
Then $\psi=(\psi_1,\psi_2)$ and up to its set of poles it must satisfy
\begin{equation}\label{psi-meromorphic}
\left\{\begin{aligned} & D_1''(\psi_1)+\eta(\psi_2)=0 \\ & D_2''(\psi_2)=0.\end{aligned}\right.
\end{equation}
Therefore $\psi_2$ is meromorphic on $L_2$. Multiplying $\psi$ by a suitable meromorphic function
we can assume that $\psi_2=\zeta_2$. Similarly, we can write $\psi_1=f.\zeta_1$ 
for some function $f$ smooth outside of the singular set of $\psi$.
Recalling that in coordinates we have 
\begin{equation*}
\eta=\ov g\cdot\frac{\zeta_1}{|\zeta_1|_H^2}\cdot\zeta_2^{\ast H}\cdot d\ov z,
\end{equation*}
the first of equations \eqref{psi-meromorphic} holds (using $\psi_2=\zeta_2$) if and only if
\begin{equation*}
 \pb f+\ov g\,\frac{|\zeta_2|_H^2}{|\zeta_1|_H^2}d\ov z=0,
\end{equation*}
or equivalently,
\begin{equation*}
\pb_z f=-\ov g\,|\zeta_L|_H^2.
\end{equation*}

Clearly the above steps can be reversed, and if we start off at the solution $f$ for \eqref{delzbar-f=},
given by Lemma \ref{sol_delbar-eq}, we construct a meromorphic section $\psi$ satisfying the conditions
of Lemma \ref{meromorphic-on-E}.

Now assume $J\subset E$ is a line subbundle different from $L_1$. We pick a meromorphic section
$\tilde\psi$ spanning $J$, and because of the above argument, can assume $\tilde\psi=(\tilde f\,\zeta_1,\zeta_2)$.
Further $\tilde f$ satisfies equation \eqref{delzbar-f=} in all but finite many points.
We conclude that $\tilde f-f=h$ is meromorphic and $\tilde\psi$ has the form $\tilde\psi=((f+h)\zeta_1,\zeta_2)$.
\end{proof}

Now let $J\subset E$ be a holomorphic line subbundle, and $\tilde\psi$ be a meromorphic section
spanning $J$, in the form given by Lemma \ref{meromorphic-on-E}, with function $f$ vanishing at $N$,
as given by Lemma \ref{sol_delbar-eq}. At any $x\in S^2-\{N\}$ 
$\zeta_2$ is regular, thus $x$ cannot be a zero of $\tilde\psi$. And $x$ is a pole of
$\tilde\psi$ if and only if $x$ is a pole of $h$ of the same order. Hence 
\begin{equation}\label{ord-psitil-1}
\ord_x(\tilde\psi)=\min\{0,\ord_x(h)\}\qquad\text{for}\ x\neq N.
\end{equation}

On a vicinity of the north pole we write 
\begin{equation}\begin{aligned}
  \tilde\psi(w)&=((f(w)+h(w))\zeta_1(w),\zeta_2(w))\\
&=(({\cal O}(w)-p_f(w)+h(w))w^{\Deg(L_1)}\zeta_{L_1,S},w^{\Deg(L_2)}\zeta_{L_2,S})\\
&=w^{\Deg(L_1)}\,(({\cal O}(w)-p_f(w)+h(w))\zeta_{L_1,S},w^k\zeta_{L_2,S}).
\end{aligned}\end{equation}
Observe that $\ord_N(\tilde\psi)=m$ if and only if $m$ is the only integer such that $w^{-m}\tilde\psi$ is
a regular holomorphic section around $w=0$. Because $\frac{{\cal O}(w)}{w^k}$ (for $w\neq0$) is bounded, a quick
study of the cases $\ord_N(h-p_f)<k$ and $\ord_N(h-p_f)\geq k$ (take this order to be infinite if $h-p_f$ is null)
leads to 
\begin{equation}\label{ord-psitil-2}
  \ord_N(\tilde\psi)=\Deg(L_1)+\min\{\ord_N(h-p_f),k\}.
\end{equation}
Let $s^-$ denote the number of poles (accounting for multiplicity) of $h$ in $S^2-\{N\}$. 
From \eqref{ord-psitil-1} and \eqref{ord-psitil-2} we get
\begin{equation}\label{deg-psitil-form}
 \Deg(\tilde\psi)=\sum_{x\in S^2}\ord_x(\tilde\psi)=\Deg(L_1)+\min\{\ord_N(h-p_f),k\}-s^-.
\end{equation}
Our aim is to compute $\Div E$, which is the maximum among the degrees of $\tilde\psi$  for all such meromorphic
sections. Thus we need to find an appropriate meromorphic $h$ that maximizes the right-hand-side of 
\eqref{deg-psitil-form}. Since $p_f(N)=0$ we should choose $h$ so that $h(N)=0$, otherwise we would 
have $\ord_N(h)\leq0$, so $\ord_N(h-p_f)=\ord_N(h)$ and $\Deg(\tilde\psi)=\Deg(L_1)+\ord_N(h)-s^-\leq\Deg(L_1)$. 
In particular we can write, without loss of generality,
\begin{equation}
 h(w)=\frac{y(w)}{1-v(w)},
\end{equation}
where $y(w)$ and $v(w)$ are polynomials, $y(0)=0=v(0)$ and $y$ and $1-v$ have no common zeros.
The number of poles $s^-$ of $h$ equals the maximum degree among the polynomials $y(w)$ and $1-v(w)$,
hence to allow $\Deg(\tilde\psi)>\Deg(L_1)$ we can assume both degrees to be less than $k$.
\begin{lem}\label{calc-deg}
Follow the above notation and conditions for $y(w)$, $v(w)$  and $s^-$, and for any polynomial in $w$
denote by a subindex $j$ the coefficient of $w^j$ in it. Let $\{b_j\}$ be the coefficients given by 
\eqref{bj-coeff}. Consider the system of equations
\begin{equation}\label{equat-ord}
\left\{\begin{aligned}
b_1&=y_1 \\ b_2&=y_2+(yv)_2 \\ b_3&=y_3+(yv)_3+(yv^2)_3 \\ & \vdots   \\
b_{k-1}&=y_{k-1}+(yv)_{k-1}+(yv^2)_{k-1}+\ldots+(yv^{k-2})_{k-1}\end{aligned}\right.
\end{equation}
Let $j^\ast\leq k$ be the maximum integer such that all equations in system \eqref{equat-ord}
with index $j<j^\ast$  are satisfied. Then 
\begin{equation}\label{deg-psitilde}
{\rm deg}(\tilde\psi)={\rm deg}(L_1)+j^\ast-s^-.
\end{equation}
\end{lem}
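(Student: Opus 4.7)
The plan is to combine formula \eqref{deg-psitil-form}, which already gives $\Deg(\tilde\psi) = \Deg(L_1) + \min\{\ord_N(h - p_f), k\} - s^-$, with a direct power-series computation showing $\min\{\ord_N(h - p_f), k\} = j^\ast$. Once this identification is made, substituting into \eqref{deg-psitil-form} yields the claimed formula immediately.

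First I would expand $h(w) = y(w)/(1 - v(w))$ as a power series around $w = 0$. Since $v(0) = 0$, the geometric series $1/(1-v) = \sum_{m\geq 0} v^m$ converges near the origin, so $h(w) = \sum_{m=0}^{\infty} y(w)\,v(w)^m$. Because $y(0) = v(0) = 0$, the polynomial $y v^m$ vanishes to order at least $m+1$ at $w = 0$, so for each fixed $j$ only the terms with $m \leq j-1$ contribute to the coefficient of $w^j$, yielding the well-defined finite sum $h_j = (y)_j + (yv)_j + (yv^2)_j + \dots + (yv^{j-1})_j$.

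Next I would observe that system \eqref{equat-ord} asserts precisely $b_j = h_j$ for $1 \leq j \leq k-1$. Since $p_f(w) = \sum_{j=1}^{k-1} b_j w^j$ with $p_f(0) = 0$, the $j$-th equation of \eqref{equat-ord} is equivalent to the vanishing of the coefficient of $w^j$ in $h - p_f$. By the definition of $j^\ast$, all such coefficients vanish for $j < j^\ast$, while (if $j^\ast < k$) the coefficient at $w^{j^\ast}$ is nonzero by maximality of $j^\ast$. Hence $\ord_N(h - p_f) = j^\ast$ when $j^\ast < k$, and $\ord_N(h - p_f) \geq k$ when $j^\ast = k$; in either case $\min\{\ord_N(h - p_f), k\} = j^\ast$.

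Substituting into \eqref{deg-psitil-form} then gives $\Deg(\tilde\psi) = \Deg(L_1) + j^\ast - s^-$. The argument is essentially bookkeeping and I do not anticipate a serious obstacle: the only point requiring care is making the power-series manipulation rigorous at $w = 0$, which is immediate from $v(0) = 0$ and the polynomial nature of $y$ and $v$, so convergence and the reordering of sums into coefficient equations present no difficulty.
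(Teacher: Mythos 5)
Your proposal is correct and follows essentially the same route as the paper: expand $h=y/(1-v)$ as the geometric series $\sum_m yv^m$, note that since $y(0)=v(0)=0$ only the terms with $m\leq j-1$ contribute to the coefficient of $w^j$, identify the $j$-th equation of \eqref{equat-ord} with the vanishing of the $j$-th coefficient of $h-p_f$, and read off $\min\{\ord_N(h-p_f),k\}=j^\ast$ before substituting into \eqref{deg-psitil-form}. The only cosmetic difference is that the paper first splits $h=p_h+{\cal O}_h$ with ${\cal O}_h$ of order at least $k$ and compares $p_h$ with $p_f$, while you work directly with the coefficients of $h-p_f$; the two are equivalent.
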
 
\begin{proof}
We only need to show that $j^\ast=\min\{\ord_N(h-p_f),k\}$ and use equation \eqref{deg-psitil-form}.
Because $h$ is holomorphic at $w=0$ we can write $h(w)=p_h(w)+{\cal O}_h(w)$ where $p_h(w)$ is
a polynomial of degree lower than $k$ and ${\cal O}_h=h-p_h$ has order greater than $k-1$ in $w=0$.
Then $\min\{\ord_N(h-p_f),k\}=\min\{\ord_N(p_h-p_f),k\}$. Expanding $h$ in the polynomials $y$ and $v$ 
close to $w=0$ we get
\begin{equation}\label{h-expand}
  h(w)=\frac{y(w)}{1-v(w)}=\sum_{m=0}^\infty y(w)v(w)^m=p_h(w)+{\cal O}_h(w).
\end{equation}
For any order $1\leq j\leq k-1$ the only summands in the third member of \eqref{h-expand} that 
add up to the $j$-th coefficient of $p_h$ are those $y\,v^m$ with $m<j$. Hence,
\begin{equation}
  {p_h}_j=y_j+(yv)_j+(yv^2)_j+\ldots+(yv^{j-1})_j.
\end{equation}
Therefore the $j$-th equation of system \eqref{equat-ord} is nothing but a statement of
equality between the $j$-th coefficients of $p_f$ and $p_h$. If $j^\ast<k$ then
all such equations for $j<j^\ast$ are satisfied but the equation for $j=j^\ast$ is not, thus the 
first non-vanishing coefficient of $p_h-p_f$ is $(p_h-p_f)_{j^\ast}$. 
If $j^\ast=k$ then $p_h-p_f\equiv0$. In both cases one has $\min\{\ord_N(p_h-p_f),k\}=j^\ast$.
\end{proof}

The practical application of Lemma \ref{calc-deg} will be shown on Section \ref{s2}.
For now it is interesting to notice that $\Div[\eta]$ will appear as the maximum right-hand-side value of
equation \eqref{deg-psitilde}. This value depends on the parameters $j^\ast$, $s^-$ and ultimately,
on the coefficients $b_j$ for $1\leq j\leq k-1$. However, the latter seem to depend upon the metric $H$, 
besides the very cohomology class $[\eta]$, after equation \eqref{bj-coeff}. Amazingly, it turns out
that $\{b_j\}$ \emph{do not depend upon the metric}, as the next result states.
\begin{lem}
Let $\beta=\{z^{j-1}\zeta_Ldz\}_{1\leq j<k}$ be a basis of $\hh$, and let $\beta^\ast$ be the
dual cannonical basis of $\ha$. Then for a given $[\eta]\in\ha$ the coefficients $\{b_j\}$ obtained
from formula \eqref{bj-coeff} using any metric are the coordinates of $[\eta]$ in $\beta^\ast$.
\end{lem}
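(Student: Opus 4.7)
The plan is to identify $b_j$ with the coupling $\cpl\phi_j,\eta\cpr$ where $\phi_j:=z^{j-1}\zeta_L\,dz$. Since the dual basis $\beta^\ast=\{\phi_j^\ast\}$ is characterized by $\cpl\phi_i,\phi_j^\ast\cpr=\delta_{ij}$, writing $[\eta]=\sum_j c_j\phi_j^\ast$ and pairing against $\phi_i$ gives $c_i=\cpl\phi_i,\eta\cpr$, so the lemma reduces to showing $b_j=\cpl\phi_j,\eta\cpr$. The metric-independence of the $b_j$ then comes as a free bonus: the coupling \eqref{coupling} descends to cohomology classes via Stokes and integration by parts (as already noted in the paper), so its value on $(\phi_j,\eta)$ depends only on $[\eta]$ and not on the metric $H$ used to select the $H$-antiholomorphic representative.

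The computation itself is a direct unwinding. First I would expand the wedge operator $\Om^{1,0}(L)\times\Om^{0,1}(L^\ast)\to\Om^2(\C)$, which wedges the 1-form factors and contracts the bundle factors via the canonical duality $L\otimes L^\ast\to\C$. Plugging in $\eta=\ov g\,\zeta_L^{\ast H}d\ov z$ and using $\zeta_L^{\ast H}=|\zeta_L|_H^2\zeta_{L^\ast}$ together with $\langle\zeta_L,\zeta_{L^\ast}\rangle=1$, I obtain
\begin{equation*}
\cpl\phi_j,\eta\cpr=i\int_{S^2}z^{j-1}\,\ov g\,|\zeta_L|_H^2\,dz\wedge d\ov z.
\end{equation*}
Next I would rewrite the integrand of \eqref{bj-coeff}: by \eqref{eta-Hnorm}, $|\eta|_H^2/g=\ov g\,|\zeta_L|_H^2\,|dz|^2$, and the coordinate change gives $w^{-j+1}=z^{j-1}$. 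The two integrands then differ only by the pointwise factor $|dz|^2\,\nu$ versus $i\,dz\wedge d\ov z$, so matching them comes down to the identity $|dz|^2\,\nu=i\,dz\wedge d\ov z$ in the fixed normalization of the round metric ($|S^2|=1$, Gauss curvature $4\pi$).

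The only delicate step is this bookkeeping of constants between the pointwise Hermitian norm $|dz|^2$ and the volume form $\nu$ — essentially a check that the factor of $i$ appearing in \eqref{coupling} lines up exactly with the integrand in \eqref{bj-coeff} under the paper's normalization. Everything else is formal: the bundle-pairing unwinding uses only the definitions of $\ast H$ and of the canonical duality $L\otimes L^\ast\to\C$; the descent to cohomology is already recorded immediately after \eqref{coupling}; and once $b_j=\cpl\phi_j,\eta\cpr$ is in hand, both metric-independence and the interpretation of $\{b_j\}$ as the coordinates of $[\eta]$ in the dual canonical basis $\beta^\ast$ follow at once.
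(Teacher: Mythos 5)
Your argument is correct and follows essentially the same route as the paper: both reduce the claim to the identity $b_j=\cpl z^{j-1}\zeta_Ldz,\eta\cpr$ and then use the descent of the coupling \eqref{coupling} to cohomology to get metric-independence and the dual-basis interpretation. The one normalization you flag, $|dz|^2\,\nu=i\,dz\wedge d\ov z$, is exactly what the paper's coordinate-free identity $|\eta|_H^2=i\Lambda(\eta^{\ast H}\wedge\eta)$ together with \eqref{eta-Hnorm} forces, so it holds in the paper's conventions and your computation closes.
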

\begin{proof}
Fix a metric $H$ and let $\eta$ be the $H$-antiholomorphic representative for $[\eta]$. 
Set $\phi=\eta^{\ast H}$, thus $\phi=g\,\zeta_Ldz$ for some polynomial $g$. Then at each $x\in S^2$,
\begin{equation}
 |\eta|_H^2=i\Lambda(\eta^{\ast H}\wedge\eta)=i\Lambda(\phi\wedge\eta).
\end{equation}
Equations \eqref{bj-coeff} turn into
\begin{equation}\label{hjdsh}
  b_j=\int_{S^2}\frac ig(\phi\wedge\eta)\,z^{j-1}=\int_{S^2}i(z^{j-1}\zeta_Ldz\wedge\eta)=
\cpl [z^{j-1}\zeta_Ldz],[\eta]\cpr,
\end{equation}
hence $b_j$ is the coupling of $[\eta]$ with the $j$-th vector of the basis $\beta$.
\end{proof}

\section{Some Conformal Curvatures on $S^2$}\label{s2}

In this section we use the previous theory to show existence of metrics pointwise conformal to 
the standard metric on $S^2$ for some non-negative curvatures with zeros. 

\subsection{Projectivized Cohomology as a Parameter Space}

We set one more equivalence to simplify our analysis. Let $\alpha$ be a non-zero complex
constant. If $[\eta]\in\ha$ is non-zero, $\eta$ represents $[\eta]$, then $(u,\xi)$ solves 
\eqref{system-eta} if and only if $(u+\ln|\alpha|,\alpha\xi)$ solves \eqref{system-eta} after
replacing $\eta$ by $\alpha\eta$. Solutions for classes that are multiple of each other differ
by a constant. The case $[\eta]=0$ is of no interest for equation $\Delta u-\lb=0$ has no solution 
at all if $\lb\neq0$. This motivates us to work on the projectivization
\begin{equation}
  \Pha=\frac{\ha-\{0\}}{[\eta]\sim\alpha[\eta]}\simeq\mathbb{CP}^{k-2}.
\end{equation}
We similarly define $\Phh$ as the projectivization of $\hh$ and the natural home for
function parameters for equation \eqref{system-phi}. For a metric $H$ the function 
given by \eqref{ha-map-hh} is homogeneous and passes to a diffeomorphism $\ast H:\Phh\to\Pha$.
To avoid cumbersome notation we will denote the projective class of some $[\eta]\in\ha$ ($[\phi]\in\hh$) 
by the same symbol $[\eta]\in\Pha$ ($[\phi]\in\Phh$).
Though we must take care of the scaling when consider equations \eqref{system-eta} and \eqref{system-phi}.
Hence we denote by $u=u([\eta],\lb)$ the \emph{zero mean value component} of a solution for 
\eqref{system-eta}. For the given projective $[\eta]$ we choose any smooth section representative 
$\eta\in[\eta]$: the solution of \eqref{system-eta} is given by $u+C$ for a uniquely defined
real constant $C$ (as long as $\lb$ is in the existence range). This approach seems good to us
because allows the definition of the function $u$ given by Lemma \ref{exist-sol-eta} directly in
$\Pha$ and avoids the necessity of a normalization condition on $\eta$.

Let $m\geq1$. Define
\begin{equation}
  \Pha_m=\{[\eta]\in\Pha\,|\,\Div[\eta]\geq\Deg(L_2)-m\}.
\end{equation}
The interest on the sets $\Pha_m$ stands for a neat paraphrase of Lemma \ref{existence-terms-diveta}:
\begin{cor}\label{eta-range-exist}
If $m>q$ then $\Pha_m\supset\Pha_q$. For $[\eta]\in\Pha$ and $m\leq{\rm deg}(L)$, $m\in\Z$, 
it holds $[\eta]\in\Pha_m-\Pha_{m-1}$ if and only if the range of values of $\lb\in(0,4\pi\,{\rm deg}(L))$ 
for which there are solutions of \eqref{system-eta} is $(0,4\pi m)$. 
\end{cor}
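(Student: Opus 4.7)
The plan is to read both assertions directly off the definitions and combine them with Lemma \ref{existence-terms-diveta}, which already pins down the existence interval in $\lb$ solely in terms of $\Div[\eta]$.

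For the inclusion $\Pha_m \supset \Pha_q$ when $m>q$, I would simply observe that the defining inequality $\Div[\eta] \geq \Deg(L_2)-m$ is weaker than $\Div[\eta] \geq \Deg(L_2)-q$, since $\Deg(L_2)-m < \Deg(L_2)-q$. So any class belonging to $\Pha_q$ also belongs to $\Pha_m$.

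For the second assertion, the key is to restate the condition $[\eta]\in\Pha_m-\Pha_{m-1}$ as a single equality. By definition, this membership means
\begin{equation*}
\Deg(L_2)-m \,\leq\, \Div[\eta] \,<\, \Deg(L_2)-(m-1),
\end{equation*}
which, because $\Div[\eta]$ is an integer, is equivalent to $\Div[\eta]=\Deg(L_2)-m$, i.e.\ to $\Deg(L_2)-\Div[\eta]=m$. Now Lemma \ref{existence-terms-diveta} asserts that \eqref{system-eta} is solvable exactly for $\lb\in(0,4\pi(\Deg(L_2)-\Div[\eta]))$ and has no solution in $[4\pi(\Deg(L_2)-\Div[\eta]),2\pi\Deg(L))$. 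Substituting the equality $\Deg(L_2)-\Div[\eta]=m$ turns the existence interval into $(0,4\pi m)$, which proves the forward implication. The converse is the same computation reversed: if the existence range for $\lb$ in $(0,4\pi\Deg(L))$ is exactly $(0,4\pi m)$, then Lemma \ref{existence-terms-diveta} forces $\Deg(L_2)-\Div[\eta]=m$, so $\Div[\eta]=\Deg(L_2)-m$, placing $[\eta]$ in $\Pha_m$ but not in $\Pha_{m-1}$.

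There is no real obstacle here, the statement is essentially a bookkeeping reformulation of Lemma \ref{existence-terms-diveta} in terms of the filtration $\{\Pha_m\}$; the only mild subtlety is the integrality of $\Div[\eta]$, which is what turns a half-open interval of admissible values into the strict equality $\Div[\eta]=\Deg(L_2)-m$ and thereby makes the layers $\Pha_m-\Pha_{m-1}$ correspond to a discrete set of existence ranges.
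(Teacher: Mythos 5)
Your argument is correct and is essentially the paper's own: the corollary is presented there without a separate proof, explicitly as a ``neat paraphrase'' of Lemma \ref{existence-terms-diveta}, and your translation of $[\eta]\in\Pha_m-\Pha_{m-1}$ into $\Div[\eta]=\Deg(L_2)-m$ via integrality is precisely the intended bookkeeping. (The one loose end --- that the lemma excludes solutions only on $[4\pi m,\,2\pi\Deg(L))$ while the corollary refers to $\lb$ ranging up to $4\pi\Deg(L)$ --- is inherited from the paper's own statement and is not something your proof introduces.)
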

From Lemma \ref{div-eta<=degL2} and the argument in the proof of Lemma \ref{existence-terms-diveta} we get,
for any $[\eta]$, $\Deg(L_2)-1\geq\Div[\eta]\geq \Deg(L_2)-\lfloor\frac k2\rfloor$, and thus the decreasing sequence
\begin{equation}
  \Pha=\Pha_{\lfloor\frac k2\rfloor}\supset\Pha_{\lfloor\frac k2\rfloor-1}\supset\Pha_{\lfloor\frac k2\rfloor-2}\supset\cdots
\supset\Pha_2\supset\Pha_1.
\end{equation}
The notation is suggestive in the sense that we conjecture all $\Pha_m$ are copies of $\mathbb{CP}^r$, for
different dimensions $r$, inside $\Pha\simeq\mathbb{CP}^{k-2}$. We have not been able to prove it so far, but
only for the ending terms of the sequence.
\begin{lem}\label{P_1-low-range}
There is a complex embedding $\mathbb{CP}^1\to\Pha$ which is a diffeomorphism onto $\Pha_1$. 
For any $[\eta]\in\Pha_1$ the divisor of the class $[\eta]^{\ast H_0}$ is $(k-2)x$ for some $x\in S^2$.
\end{lem}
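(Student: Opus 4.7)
The plan is to describe $\Pha_1$ explicitly in the projective coordinates $[b_1:\ldots:b_{k-1}]$ on $\Pha\simeq\mathbb{CP}^{k-2}$ via Lemma \ref{calc-deg}, and then translate through the identification of the $b_j$ with the coordinates of $[\eta]$ in the basis $\beta^\ast$ (last lemma of Section 3) to read off the divisor of $[\eta]^{\ast H_0}$ in $\Phh$.

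For the first assertion, Lemma \ref{div-eta<=degL2} gives $\Div[\eta]\leq\Deg(L_2)-1$ for $[\eta]\neq 0$, so $[\eta]\in\Pha_1$ iff $\Div[\eta]=\Deg(L_2)-1$, which via Lemma \ref{calc-deg} requires a meromorphic $h$ with $j^\ast(h)-s^-(h)=k-1$. Since $j^\ast\leq k$ and $s^-\geq 0$, only $(j^\ast,s^-)=(k,1)$ or $(k-1,0)$ are feasible. The case $(k-1,0)$ forces $h\equiv 0$ by a Liouville argument (a meromorphic function on $S^2$ holomorphic off $N$ and vanishing at $N$ must vanish identically), and then the constraint $j^\ast=k-1$ reads $b_1=\ldots=b_{k-2}=0$ and $b_{k-1}\neq 0$. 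The case $(k,1)$ forces $h(w)=c_1 w/(1-bw)$ with $c_1\neq 0$, and all $k-1$ equations of \eqref{equat-ord} reduce to $b_j=c_1 b^{j-1}$. Unifying the two, $\Pha_1$ is the image of the rational normal curve $[s:t]\mapsto[s^{k-2}:s^{k-3}t:\ldots:t^{k-2}]$, yielding the desired holomorphic embedding $\mathbb{CP}^1\hookrightarrow\Pha$.

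For the second assertion, write $[\phi]=[\eta]^{\ast H_0}=[g\zeta_L dz]$ with $g(z)=\sum_{i=0}^{k-2}g_i z^i$. Equation \eqref{hjdsh} then reads $b_j=\cpl\beta_j,\eta\cpr=\anl\beta_j,\phi\anr_{H_0}$. A direct polar-coordinate computation using $|\zeta_L|^2_{H_0}=(1+|z|^2)^{-k}$ shows the basis $\beta=\{\beta_j=z^{j-1}\zeta_L dz\}$ is $H_0$-orthogonal, so $b_j=c_j\bar g_{j-1}$ with $c_j=\|\beta_j\|^2_{H_0}$; and the Beta identity $\int_0^\infty s^{j-1}/(1+s)^k\,ds=(j-1)!(k-j-1)!/(k-1)!$ combined with $\binom{k-2}{j-1}$ yields the critical cancellation that $c_j\binom{k-2}{j-1}=d$ is a positive constant independent of $j$. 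Hence if $g(z)=C(z-p_0)^{k-2}$, binomial expansion gives $b_j=\bar C\,d\,(-\bar p_0)^{k-1-j}$, and after rescaling and reversing coordinate order this is exactly the Veronese point with parameter $b=-1/\bar p_0$; the limit cases $p_0=0$ and $p_0=\infty$ pick up the two endpoints $[0:\ldots:0:1]$ and $[1:0:\ldots:0]$. Since $\ast H_0$ is a diffeomorphism and both $\{[(z-p_0)^{k-2}\zeta_L dz]:p_0\in\mathbb{CP}^1\}\subset\Phh$ and $\Pha_1$ are copies of $\mathbb{CP}^1$, the assignment is bijective, so the divisor of $[\eta]^{\ast H_0}$ equals $(k-2)x$ where $z(x)=p_0$.

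The main obstacle is establishing the exact identity $c_j\binom{k-2}{j-1}=d$ (constant in $j$): without it, $\ast H_0$ would send the concentrated-divisor locus to some rational normal curve in $\mathbb{CP}^{k-2}$, but not obviously the one cut out by $\Pha_1$, so Part 2 would only yield an inclusion. This precise numerical alignment reflects the $SU(2)$-invariance of the Fubini--Study-type metric $H_0$ and constitutes the analytic heart of the lemma.
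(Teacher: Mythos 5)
Your proposal is correct, and the first half coincides with the paper's argument: the same case analysis $j^\ast-s^-=k-1$ from Lemma \ref{calc-deg}, forcing $(j^\ast,s^-)\in\{(k-1,0),(k,1)\}$, identifies $\Pha_1$ with the rational normal curve $[1:v_1:\cdots:v_1^{k-2}]$ together with the point $[0:\cdots:0:1]$. Where you genuinely diverge is in the second half. The paper only computes the coefficients $b_j$ directly in the single rotationally symmetric case $\phi=\zeta_Ldz$ (where symmetry kills $b_j$ for $j>1$), and then handles a general $\phi=(z-a)^{k-2}\zeta_Ldz$ by re-running the entire construction of Lemmas \ref{meromorphic-on-E} and \ref{sol_delbar-eq} in a rotated stereographic chart centered at $x_0$, finishing with a topological identification of the two copies of $\mathbb{CP}^1$. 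You instead carry out exactly the ``not so short analytic argument'' the paper explicitly declines: $H_0$-orthogonality of the basis $\beta$ by the angular integral, the Beta identity $\int_0^\infty s^{j-1}(1+s)^{-k}ds=(j-1)!(k-j-1)!/(k-1)!$, and the cancellation $c_j\binom{k-2}{j-1}=1/(k-1)$, so that the binomial expansion of $(z-p_0)^{k-2}$ lands the coefficients $b_j$ in geometric progression with ratio $-1/\bar p_0$. Your computation checks out, and it buys something the paper's route does not: an explicit, surjective parametrization $p_0\mapsto[\eta]$ of $\Pha_1$, which makes the final bijectivity immediate rather than resting on an invariance-of-domain style argument. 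The paper's route, in exchange, avoids any special-function identity and makes the role of the isometry group transparent --- the same $SU(2)$-invariance that you correctly identify as the hidden reason your numerical cancellation works.
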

\begin{proof}
Let $[\eta]\in\Pha_1$, thus $\Div[\eta]=\Deg(L_2)-1=\Deg(L_1)+k-1$. Following Lemma \ref{calc-deg} and
equation \eqref{deg-psitilde} for a section $\tilde\psi$ with maximal degree we find that $j^\ast-s^-=k-1$.
Because of the bounds $1\leq j^\ast\leq k$ and $s^-\geq 0$ we get $s^-\leq1$. The polynomials 
$y(w)$ and $v(w)$ are linear or null, and system \eqref{equat-ord} turns into
\begin{equation}\label{equat-ord-P_1}
\left\{\begin{aligned}
b_1&=y_1 \\ b_2&=y_1v_1 \\ b_3&=y_1v_1^2\\ & \vdots   \\
b_{k-1}&=y_1v_1^{k-2}\end{aligned}\right.
\end{equation}
In case $s^-=0$ and $j^\ast=k-1$ the meromorphic function $h$ of the Lemma is identically zero, so $y_1=0$.
We get $b_j=0$ for $1\leq j\leq k-2$ and $b_{k-1}\neq0$. Otherwise, $s^-=1$ and $j^\ast=k$. Thus
$y_1\neq0$ and all equations in \eqref{equat-ord-P_1} are satisfied. With this characterization it is easy 
to see that the function
\begin{equation}
  \Psi[b_1:b_2]=\left\{\begin{aligned} & \left[1:\frac{b_2}{b_1}:\frac{b_2^2}{b_1^2}:\cdots:
\frac{b_2^{k-2}}{b_1^{k-2}}\right]\qquad \text{if}\ b_1\neq0\\
   & \left[\frac{b_1^{k-2}}{b_2^{k-2}}:\frac{b_1^{k-3}}{b_2^{k-3}}:\cdots:\frac{b_1}{b_2}:1\right]\qquad 
\text{if}\ b_2\neq0
\end{aligned}\right.
\end{equation}
is a diffeomorphism from $\mathbb{CP}^1$ onto the homogeneous coordinates of the classes $[\eta]$ with
$\Div[\eta]=\Deg(L_2)-1$.

Now we look for possibilities for the divisor of $[\phi]=[\eta]^{\ast H_0}$. First consider the case
$\phi=\zeta_Ldz$ (hence $g$ is a constant). In the computation of $b_j$ in formula \eqref{bj-coeff} we can replace
$|\eta|_{H_0}^2$ by $|\phi|_{H_0}^2$. Due to the rotational symmetry for the metric $H_0$ in \eqref{H0.def} 
and of the holomorphic coordinate, the integrals \eqref{bj-coeff} vanish for $j>1$ and is non-zero in $j=1$,
therefore the coefficients associated to $[\zeta_Ldz]^{\ast H_0}$ are $b_1\neq0$ and $b_j=0$, $2\leq j\leq k-1$.
We conclude that $[\zeta_Ldz]^{\ast H_0}=\Psi[1:0]\in\Pha_1$. 

In general, let $\phi=(z-a)^{k-2}\zeta_Ldz$, where $a=z(x_0)$ for some $x_0\in S^2$. A not so short 
analytic argument to show that $[\phi]^{\ast H_0}$ is in $\Pha_1$ is simply to compute $b_j$ 
with formula \eqref{bj-coeff} and showing those are in geometric progression. A more direct geometric approach, 
though, is noticing that the coefficients given by \eqref{bj-coeff} depend on the 
basis $\{z^j\zeta_Ldz\}_{0\leq j\leq k-2}$ of $\hh$. Change this basis to $\{\tilde z^j\phi\}_{0\leq j\leq k-2}$
where $\tilde z=z_{x_0}$ is a stereographic coordinate satisfying $\tilde z(-x_0)=0$, and use 
$\tilde w=1/\tilde z$ to replace $w$ in the integrals \eqref{bj-coeff}. 
Clearly the whole construction of Lemmas \ref{meromorphic-on-E} and
\ref{sol_delbar-eq} does not depend on the fact that $N=(0,0,1)$, or rather, on the coordinate chart
used. In the new charts given by $\tilde z$ (or $\tilde w$) and $\phi$, the argument follows like in the previous
paragraph, so $[\phi]^{\ast H_0}\in\Pha_1$ in this case also. This shows that all classes $[\phi]\in\Phh$
with a zero of order $k-2$ are the images of classes in $\Pha_1$ under $\ast H_0$. The conclusion then follows since
both of the set of those classes, as well as $\Pha_1$, are diffeomorphic to $\mathbb{CP}^1$, and $\ast H_0$
is a diffeomorphism between them.
\end{proof} 

\subsection{The isometry group of $S^2$}

Let $\vf:S^2\to S^2$ be an isometry. Take points $x,y\in S^2$ with $\vf(x)=y$. Choose stereographic
coordinates $z,v$ around $x$ and $y$, respectively, such that $z(x)=v(y)=0$. Since $\vf$ is conformal
and is an isometry it is not hard to see that $v=\vf(z)=b z$ for a unitary complex $b$, if $\vf$
preserves orientation, and $v=b \ov z$, if $\vf$ reverses orientation. Then,
for $h$ a complex-valued function on $S^2$ we set
for any $x\in S^2$
\begin{equation}
\vfs h(x)=\left\{\begin{aligned}& h(\vf(x))\qquad\text{if}\ \vf\ \text{is orientation preserving}\\
   & \ov{h(\vf(x))}\qquad\text{if}\ \vf\ \text{is orientation reversing}\end{aligned}\right.
\end{equation}
The conjugation in the second case above aims to preserve holomorphicity: $h$ is holomorphic in some
open set $U\subset S^2$ if and only if $\vfs h$ is holomorphic in $\vf^{-1}(U)$. 
This definition is naturally extended to a complex-valued differential form $\om$: writing locally 
$\om=h\,\mu$ for $h$ a function and $\mu$ a real-valued form we set $\vfs\om=\vfs h\vfs\mu$, where $\vfs\mu$ is 
the usual pull-back of forms.

We must define a similar notion for classes in $\Phh$ and $\Pha$. This is not that simple because there 
is no cannonical identification between the fibers $L_x$ and $L_{\vf(x)}$, for $x$ in $S^2$.
We do that by first defining the pull-back of divisors. If ${\cal D}=\sum_j a_jx_j$ we set 
$\vfs{\cal D}=\sum_ja_j\vf^{-1}(x_j)$. 

Now fix some holomorphic $\zeta$ in $L$ whose divisor is ${\cal D}$ and set $\vfs\zeta$ as some 
non-trivial holomorphic section with divisor $\vfs{\cal D}$. 
If $\psi$ is an arbitrary smooth section in $\Om^{p,q}(L)$ then $\psi=\om\otimes\zeta=\om\zeta$ for some form $\om$ 
smooth away of the singular set of $\zeta$. Define
\begin{equation}
\vfs\psi=\vfs\om\vfs\zeta.
\end{equation}
\begin{lem}\label{prop.vfs.em-holom}
Let $H=H_u$ be a metric. Then:\\
(i) The operator $D''$ commutes with $\vfs$. In particular, $\psi\in\Om^{p,q}(L)$ is meromorphic with 
divisor ${\cal D}$ if and only if $\vfs\psi$ is meromorphic with divisor $\vfs{\cal D}$.
(ii) There is a constant $c>0$ such that for any $\psi,\chi\in\Om^{p,q}(L)$ it holds
\begin{equation}\label{vfs-metric}
\vfs\langle\psi\wedge\chi\rangle_{H_u}=c\langle\vfs\psi\wedge\vfs\chi\rangle_{H_{\vfs u}}.
\end{equation}

\noindent
(iii) For any section-valued form $\psi$  one has $\vfs(D_u'\psi)=D_{\vfs u}'(\vfs\psi)$. In particular,
$\vfs$ commutes with $D_{H_0}'$.
\end{lem}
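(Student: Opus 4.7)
The plan is to reduce each of (i), (ii), (iii) to a scalar identity by writing sections against the fixed holomorphic generator $\zeta$, and then to carry out the remaining computations in $(z,\ov z)$-coordinates on the regular set $S^2-\D$. The one non-routine ingredient is a metric comparison between $|\zeta|_{H_0}^2\circ\vf$ and $|\vfs\zeta|_{H_0}^2$, which I expect to be the main obstacle and which I would settle by a harmonic function argument on $S^2$.

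For (i), write any $\psi\in\Om^{p,q}(L)$ as $\psi=\omega\,\zeta$ with $\omega$ a scalar form smooth off $\D$. Since $D''\zeta=0$ this gives $D''\psi=(\pb\omega)\zeta$, and by the same token $D''(\vfs\psi)=\pb(\vfs\omega)\,\vfs\zeta$ because $\vfs\zeta$ is holomorphic by construction. Thus (i) reduces to the scalar identity $\vfs\,\pb=\pb\,\vfs$. For orientation-preserving $\vf$ (locally $v=bz$) this is the usual chain rule. For orientation-reversing $\vf$ (locally $v=b\ov z$) the conjugation $\vfs h=\ov{h\circ\vf}$ in the definition is precisely what is needed to cancel the anti-holomorphicity of $\vf$: a short coordinate check shows that $\vfs$ preserves the $(p,q)$-bigrading and commutes with $\pb$. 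The divisor statement then follows immediately from $\vfs(\omega\zeta)=\vfs\omega\cdot\vfs\zeta$ and additivity of orders.

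For (ii), writing $\psi=\omega\zeta$ and $\chi=\nu\zeta$, the pairing becomes $\langle\psi\wedge\chi\rangle_{H_u}=(\omega\wedge\ov\nu)|\zeta|_{H_u}^2$ locally, while the right-hand side of \eqref{vfs-metric} is $(\vfs\omega\wedge\ov{\vfs\nu})|\vfs\zeta|_{H_{\vfs u}}^2$. Because $\vfs$ commutes with conjugation and with wedge by definition, and because $\vfs u=u\circ\vf$ for real $u$ (so that $|\zeta|_{H_u}^2\circ\vf=e^{2\vfs u}(|\zeta|_{H_0}^2\circ\vf)$), the identity \eqref{vfs-metric} collapses to
\begin{equation*}
|\zeta|_{H_0}^2\circ\vf=c\,|\vfs\zeta|_{H_0}^2,\qquad\text{for some}\ c>0.
\end{equation*}
This is the heart of the lemma. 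I would prove it by applying Lemma \ref{curvature-merom} twice: $-\Delta\ln|\vfs\zeta|_{H_0}=2\pi\Deg(L)$ off $\vfs\D$, and since $\vf$ is an isometry the Laplacian commutes with $\vfs$, so the same equation holds for $\ln|\zeta|_{H_0}\circ\vf$ off the same set. Their difference is harmonic on $S^2-\vfs\D$ and has matching logarithmic singularities at every point of $\vfs\D$ (since $\zeta\circ\vf$ in a local trivialization and $\vfs\zeta$ vanish or blow up with the same multiplicities, both prescribed by $\vfs\D$). Hence the difference is bounded at each singularity, extends to a harmonic function on all of $S^2$, and must be constant.

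For (iii), Leibniz together with the formula $D'_u\zeta=\pa\ln|\zeta|_{H_u}^2\cdot\zeta$ on the complement of $\D$ gives
\begin{equation*}
D'_u\psi=(\pa\omega)\zeta+(-1)^{|\omega|}\omega\wedge\pa\ln|\zeta|_{H_u}^2\cdot\zeta.
\end{equation*}
The same orientation analysis as in (i), applied to $\pa$ in place of $\pb$, shows $\vfs\,\pa=\pa\,\vfs$; and taking $\pa\log$ of the identity from (ii) kills the additive constant $\ln c$, yielding $\vfs(\pa\ln|\zeta|_{H_u}^2)=\pa\ln|\vfs\zeta|_{H_{\vfs u}}^2$. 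Combining the two gives $\vfs(D'_u\psi)=D'_{\vfs u}(\vfs\psi)$, and the special case $u\equiv0$ (so that $H_{\vfs u}=H_0$) yields the $D_{H_0}'$ assertion. The technical core is the metric comparison of the previous paragraph; the rest is careful bookkeeping confirming that the conjugation-twist built into the definition of $\vfs$ for orientation-reversing isometries is exactly the twist needed to keep both $\pa$ and $\pb$ equivariant.
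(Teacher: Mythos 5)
Your proposal is correct and follows essentially the same route as the paper: reduce everything to the generator $\zeta$, establish the key identity $\vfs|\zeta|_{H_0}^2=c|\vfs\zeta|_{H_0}^2$ by showing the log-ratio has cancelling singularities and is harmonic (via Lemma \ref{curvature-merom} and the fact that an isometry commutes with $\Delta$), and then deduce (iii) from (ii). Your use of the explicit connection form $D_u'\zeta=\pa\ln|\zeta|_{H_u}^2\cdot\zeta$ in (iii) is only a cosmetic variant of the paper's implicit identity $\pa|\zeta|_{H_u}^2=\langle D_u'\zeta,\zeta\rangle_{H_u}$.
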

\begin{proof}
(i) 
Let $\psi$ be a smooth $(p,q)$-section, then $\psi=\om\zeta$. Thus
\begin{equation}\begin{aligned}
D''(\vfs\psi)&=D''(\vfs\om\vfs\zeta)=\pb(\vfs\om)\vfs\zeta=\vfs\pb\om\vfs\zeta=\\
    &=\vfs(\pb\om\zeta)=\vfs D''\psi,
\end{aligned}\end{equation}
since both of $\zeta$ and $\vfs\zeta$ are holomorphic and $\pb$ commutes with $\vfs$ by a property of
the pull-back on forms. Therefore $\vfs$ takes meromorphic sections to meromorphic sections.
Let $\psi=\om\zeta$ be meromorphic. Then $\om$ is meromorphic. Tensoring meromorphic 
sections adds up their divisors, hence
\begin{equation}\begin{aligned}
{\cal D}(\vfs\psi)&={\cal D}(\vfs\om)+{\cal D}(\vfs\zeta)=\vfs{\cal D}(\om)+\vfs{\cal D}(\zeta)\\
&=\vfs{\cal D}(\om\zeta)=\vfs{\cal D}(\psi).
\end{aligned}\end{equation}

\noindent
(ii)
Consider first that $\psi=\chi=\zeta$ and $u=0$, so $H=H_0$. Equality \eqref{vfs-metric} turns into 
$\vfs|\zeta|_{H_0}^2=c|\vfs\zeta|_{H_0}^2$. Notice that both members of this equation 
are functions with singularities in the same points, namely the divisor set of $\vfs\zeta$. 
We claim that the function $f=\ln\left(\frac{\vfs|\zeta|_{H_0}^2}{|\vfs\zeta|_{H_0}^2}\right)$ is
actually smooth in $S^2$. For any $x$ in this singular set, let $y=\vf(x)$, and take $z$, $v$ coordinates
centered at $x$, $y$, and such that $v(z)=\vf(z)=z$ (assume without loss of generality $\vf$ is 
orientation preserving). Then
\begin{equation}
\zeta(v)=v^m\tilde\zeta_y(v),\quad\vfs\zeta(z)=z^m\tilde\zeta_x(z)
\end{equation}
where $\tilde\zeta_y,\tilde\zeta_x$ are regular around $v=0$ and $z=0$, respectively, and
\begin{equation}
f(z)=\ln\left(\frac{|\zeta(v(z))|_{H_0}^2}{|\vfs\zeta(z)|_{H_0}^2}\right)=
\ln\left(\frac{|\tilde\zeta_y(v(z))|_{H_0}^2}{|\tilde\zeta_x(z)|_{H_0}^2}\right)
\end{equation}
is clearly smooth at $x$. Thus $f\in C^\infty(S^2)$.

Now recall Lemma \ref{curvature-merom} and compute
\begin{equation}\begin{aligned}
\Delta f&=\Delta\left(\ln(\vfs|\zeta|_{H_0}^2)-\ln|\vfs\zeta|_{H_0}^2\right)\\
&=\vfs(\Delta\ln|\zeta|_{H_0}^2)-\Delta\ln|\vfs\zeta|_{H_0}^2\\
&=\vfs(-4\pi\Deg(L))+4\pi\Deg(L)=0.
\end{aligned}\end{equation}
In the above we used that $\zeta$, $\vfs\zeta$ are holomorphic and that $\vfs$ commutes with $\Delta$.
In particular we obtain that $f$ is harmonic in the whole sphere, so $f$ is constant, and 
equation \eqref{vfs-metric} follows immediately in this particular case for an appropriate $c>0$. 
Now for an arbitrary $u$,
\begin{equation}
\vfs|\zeta|_{H_u}^2=\vfs(|\zeta|_{H_0}^2\,e^{2u})=c|\vfs\zeta|_{H_0}^2\,e^{2\vfs u}=
c|\vfs\zeta|_{H_{\vfs u}}^2.
\end{equation}
The general case is a consequence of this one once we write the 
section-valued forms $\psi,\chi$ in components with $\zeta$.

\noindent
(iii) 
Again, the general case will follow as routine if we prove it for the very case $\psi=\zeta$.
The section $D_u'\zeta$ can be managed implicitly in the equation 
$\pa|\zeta|_{H_u}^2=\langle D_u'\zeta,\zeta\rangle_{H_u}$. Applying $\vfs$ to it and using part (ii) we derive
\begin{equation}\label{vfs-del}
\vfs\pa|\zeta|_{H_u}^2=c\langle\vfs D_u'\zeta,\vfs\zeta\rangle_{H_{\vfs u}}.
\end{equation}
Interchanging $\vfs$ and $\pa$ in the above equation yields
\begin{equation}\label{del-vfs}
\pa\vfs|\zeta|_{H_u}^2=\pa\left(c|\vfs\zeta|_{H_{\vfs u}}^2\right)=
c\langle D_{\vfs u}'\vfs\zeta,\vfs\zeta\rangle_{H_{\vfs u}},
\end{equation}
and since $\pa$ and $\vfs$ commute, the last members of equations \eqref{vfs-del} and
\eqref{del-vfs} are equal. It follows $\vfs D_u'\zeta=D_{\vfs u}'\vfs\zeta$. This finishes with
the Lemma's proof.
\end{proof}

It becomes suitable to define the pull-back of a metric: for $H=H_0e^{2u}$ we set
$\vfs H_u=H_{\vfs u}$. The definition of $\vfs$ on sections of the bundle $L^\ast$ is now very natural. 
For a section $\xi$ on $L^\ast$ we define $\vfs\xi=(\vfs(\xi^{\ast H}))^{\ast\vfs H}$.
Clearly one has to show invariance from the metric's choice. 
\begin{equation}\begin{aligned}
\vfs\xi&=(\vfs(\xi^{\ast H_u}))^{\ast\vfs H_u}=(\vfs(\xi^{\ast H_0})(\vfs e^{-2u}))^{\ast H_{\vfs u}}\\
    &=(\vfs(\xi^{\ast H_0}))^{\ast H_0}(e^{-2\vfs u})(e^{2\vfs u})=(\vfs(\xi^{\ast H_0}))^{\ast H_0}.
\end{aligned}\end{equation}
The proof of the next Lemma will be skipped.
\begin{lem}\label{prop.vfs.em-anti-holom}
(i) For any $\xi\in\Om^{p,q}(L^\ast)$ it holds $D''\vfs\xi=\vfs(D''\xi)$. In particular, $\vfs$
descends to the cohomology $\ha$.\\
(ii) If $\phi$, $\eta$ are sections in $\Om^{1,0}(L)$, $\Om^{0,1}(L^\ast)$, respectively, then 
$\vfs(\phi\wedge\eta)=c(\vfs\phi\wedge\vfs\eta)$, where $c$ is the same constant as in Lemma 
\ref{prop.vfs.em-holom} part (ii). In particular $\cpl\phi,\eta\cpr=c\cpl\vfs\phi,\vfs\eta\cpr$.\\
(iii) For a section $\eta\in\Om^{0,1}(L^\ast)$ it holds $\vfs(D_u'\eta)=D_{\vfs u}'(\vfs\eta)$.
\end{lem}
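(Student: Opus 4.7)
The plan is to deduce each of the three parts from the corresponding assertion of Lemma \ref{prop.vfs.em-holom} on $L$, via the $H$-duality. The one compatibility I will need repeatedly is that $\vfs$ intertwines the $\ast H_u$-dual with the $\ast H_{\vfs u}$-dual, i.e. $\vfs(\chi^{\ast H_u})=(\vfs\chi)^{\ast H_{\vfs u}}$ for every $u$; this is essentially the computation already carried out in the excerpt to show that the definition $\vfs\xi=(\vfs(\xi^{\ast H_0}))^{\ast H_0}$ is independent of the reference metric. Combined with the standard Chern identity $(D_u'\chi)^{\ast H_u}=D''(\chi^{\ast H_u})$ on a hermitian line bundle, this turns each assertion into a short diagram chase.

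For (i), I would set $\psi=\xi^{\ast H_0}\in\Om^{q,p}(L)$. Applying $\ast H_0$ to both sides of the desired equality $D''\vfs\xi=\vfs D''\xi$, the Chern identity transforms the left-hand side into $D'\vfs\psi$ and the right-hand side into $\vfs(D'\psi)$; these agree by Lemma \ref{prop.vfs.em-holom}(iii) with $u=0$, and since $\ast H_0$ is invertible this proves (i). Descent to $\ha$ is then immediate because $\vfs$ preserves both $D''$-closed sections and $D''$-exact sections.

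For (ii), I would write $\eta=\psi^{\ast H_0}$ for the unique $\psi\in\Om^{1,0}(L)$ with this property, so that the 2-form $\phi\wedge\eta$ is by construction the pointwise Hermitian pairing $\langle\phi\wedge\psi\rangle_{H_0}$. The desired pointwise identity $\vfs(\phi\wedge\eta)=c(\vfs\phi\wedge\vfs\eta)$ then follows directly from Lemma \ref{prop.vfs.em-holom}(ii) at $u=0$ together with the compatibility $\vfs\eta=(\vfs\psi)^{\ast H_0}$. The integrated statement $\cpl\phi,\eta\cpr=c\cpl\vfs\phi,\vfs\eta\cpr$ comes by integrating over $S^2$: for orientation-preserving $\vfs$, the pull-back preserves both $i$ and the integrand, while for orientation-reversing $\vfs$, the sign of the orientation flip cancels against the sign produced by $\vfs i=-i$, leaving the same constant $c$.

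For (iii), the new feature is that $D_u'$ genuinely depends on the metric, so one must use the $H_u$-duality rather than $H_0$. I would set $\psi=\eta^{\ast H_u}\in\Om^{1,0}(L)$ and exploit the metric-independence of $\vfs$ to write $\vfs\psi=(\vfs\eta)^{\ast H_{\vfs u}}$. Applying $\ast H_{\vfs u}$ to both sides of the desired equality converts $\vfs D_u'\eta$ to $\vfs(D''\psi)$ and $D_{\vfs u}'\vfs\eta$ to $D''(\vfs\psi)$, which agree by Lemma \ref{prop.vfs.em-holom}(i). The main obstacle throughout is the bookkeeping of metrics: every duality $\ast H$ must be indexed by the correct metric ($H_0$ in (i) and (ii), and the pair $H_u$, $H_{\vfs u}$ in (iii)), and the metric-independence compatibility must be invoked at exactly the right step to carry $\vfs$ across the dual.
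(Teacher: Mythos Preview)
The paper skips the proof of this lemma entirely, so there is nothing to compare your argument against. Your strategy---reducing each item to the corresponding item of Lemma~\ref{prop.vfs.em-holom} via the $\ast H$-duality and the Chern identity $(D'_u\chi)^{\ast H_u}=D''(\chi^{\ast H_u})$, together with the compatibility $\vfs(\chi^{\ast H_u})=(\vfs\chi)^{\ast H_{\vfs u}}$---is precisely the natural one, and parts (i), (iii), and the pointwise half of (ii) go through cleanly as you describe.

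One small caveat on the integrated statement in (ii) when $\vf$ is orientation-reversing: your ``sign cancellation'' is not quite complete. In that case $\vfs\omega=\overline{\vf^\ast\omega}$, so integrating the pointwise identity and using $\int_{S^2}\vf^\ast\alpha=-\int_{S^2}\alpha$ gives
\[
-\,\overline{\int_{S^2}(\phi\wedge\eta)}\;=\;c\int_{S^2}(\vfs\phi\wedge\vfs\eta),
\]
and after multiplying by $i$ one obtains $\overline{\cpl\phi,\eta\cpr}=c\,\cpl\vfs\phi,\vfs\eta\cpr$, i.e.\ a complex conjugate appears on the left. This does not affect any of the later arguments in the paper (only pointwise norms and projective classes are used downstream), but the bookkeeping in your last sentence for (ii) should be tightened; as written, the two signs do not simply cancel to give the unconjugated identity.
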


The whole construction of the $\vfs$ pull-back started with a particular holomorphic section of $L$.
Since it is $\C$-linear and up to a constant factor, holomorphic sections are defined by their divisors,
we conclude $\vfs$ induces pull-back in a unique way on $\Phh$ and $\Pha$. Equivalently, there is
a right action of the isometry group ${\rm Iso}(S^2)$ on the manifolds $\Phh$ and $\Pha$.

The operation of the $H$-dual given by \eqref{ha-map-hh} is defined in the projective
cohomology. In view of Corollary \ref{eta-range-exist} and the definition of $u([\eta],\lb)$ we can set
the smooth map
\begin{equation}\begin{aligned}
{\cal F}:\!\!\!\!\bigcup_{\lfloor\frac k2\rfloor\geq m\geq 1}&\!\!\!\! (\Pha_m-\Pha_{m-1})\times(0,4\pi m)\to \Phh,\\
{\cal F}([\eta],\lb)&={\cal F}_\lb[\eta]=[\eta]^{\ast H_{u([\eta],\lb)}}.
\end{aligned}\end{equation}

The function ${\cal F}$ behaves well respect to the isometries of $S^2$.
\begin{lem}\label{F-lb-commutes-iso}
Let $\vf$ be an isometry. Fix $([\eta],\lb)$ in the domain of ${\cal F}$. 
Then $\vfs{\cal F}_\lb[\eta]={\cal F}_\lb(\vfs[\eta])$.
\end{lem}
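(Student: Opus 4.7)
The plan is to transport the system \eqref{system-eta} across $\vf$ using the commutation properties of $\vfs$ established in Lemmas \ref{prop.vfs.em-holom} and \ref{prop.vfs.em-anti-holom}, and then to appeal to the uniqueness part of Lemma \ref{exist-sol-eta}. Concretely, I fix a representative $\eta$ of $[\eta]$ and let $(u,\xi)$ be the unique full solution of \eqref{system-eta} at parameter $\lb$ for that representative, so that $u$ agrees with $u([\eta],\lb)$ up to an additive constant.

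Next I apply $\vfs$ to both equations of \eqref{system-eta}. I use that $\vfs$ commutes with $\Delta$ (because $\vf$ is an isometry), with $D''$ by parts (i) of Lemmas \ref{prop.vfs.em-holom} and \ref{prop.vfs.em-anti-holom}, and satisfies $\vfs\circ D'_u=D'_{\vfs u}\circ\vfs$ by Lemma \ref{prop.vfs.em-anti-holom}(iii). The required pointwise norm transformation
\begin{equation*}
\vfs\bigl(|\eta+D''\xi|_{H_0}^2\bigr)=c\,|\vfs\eta+D''(\vfs\xi)|_{H_0}^2
\end{equation*}
follows from Lemma \ref{prop.vfs.em-anti-holom}(ii) together with the identity $\vfs(\eta^{\ast H_0})=(\vfs\eta)^{\ast H_0}$ (a direct consequence of the definition of $\vfs$ on $L^\ast$, since $\vfs H_0=H_0$). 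Setting $\tilde u:=\vfs u-\tfrac12\ln c$ absorbs the constant $c$ into the exponential and turns the pulled-back first equation into the first equation of \eqref{system-eta} for the representative $\vfs\eta$. The second equation pulls back to $D'_{\vfs u}(\vfs\eta+D''(\vfs\xi))=0$, and since $D'_u$ on a line bundle depends on $u$ only through $\pa u$, additive constants are irrelevant and the equation also reads $D'_{\tilde u}(\vfs\eta+D''(\vfs\xi))=0$. Thus $(\tilde u,\vfs\xi)$ is a full solution of \eqref{system-eta} at parameter $\lb$ for the representative $\vfs\eta$ of $\vfs[\eta]$.

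The uniqueness part of Lemma \ref{exist-sol-eta} then forces $\tilde u$ to coincide with $u(\vfs[\eta],\lb)$ up to an additive real constant, and hence the same is true of $\vfs u$ and $u(\vfs[\eta],\lb)$. Consequently the hermitian metrics $H_{\vfs u}$ and $H_{u(\vfs[\eta],\lb)}$ differ only by a global positive scalar, so the sections $(\vfs\eta)^{\ast H_{\vfs u}}$ and $(\vfs\eta)^{\ast H_{u(\vfs[\eta],\lb)}}$ define the same projective class in $\Phh$. Combining this with the pull-back identity $\vfs(\eta^{\ast H_u})=(\vfs\eta)^{\ast H_{\vfs u}}$ one obtains
\begin{equation*}
\vfs\mathcal{F}_\lb[\eta]=\bigl[(\vfs\eta)^{\ast H_{\vfs u}}\bigr]=\bigl[(\vfs\eta)^{\ast H_{u(\vfs[\eta],\lb)}}\bigr]=\mathcal{F}_\lb(\vfs[\eta]).
\end{equation*}

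The main thing to watch is the universal constant $c$ of Lemma \ref{prop.vfs.em-holom}(ii): system \eqref{system-eta} is \emph{not} invariant under a scalar shift of $u$ alone, so $c$ has to be tracked explicitly in the pull-back argument. Fortunately, shifting $u$ by $\tfrac12\ln c$ corresponds exactly to rescaling the representative of the cohomology class by $\sqrt c$, an operation that is invisible in $\Pha$ and $\Phh$, so the identity persists at the projective level.
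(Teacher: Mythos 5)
Your argument is correct and follows essentially the same route as the paper's own proof: apply $\vfs$ to both equations of \eqref{system-eta}, use the commutation lemmas to identify the result as the system for $\vfs\eta$ after absorbing the constant $c$ into a shift of $u$ by $\tfrac12\ln c$, invoke uniqueness from Lemma \ref{exist-sol-eta}, and conclude at the projective level via $\vfs(\eta^{\ast H_u})=(\vfs\eta)^{\ast H_{\vfs u}}$. The only cosmetic difference is that the paper pins down the zero-mean-value representative exactly (using that $\vfs$ preserves mean value), whereas you work modulo additive constants, which is equally valid since these are invisible in $\Phh$.
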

\begin{proof}
Recall that $u([\eta],\lb)$ designates the zero mean-value component of the actual solution
of the first of equations \eqref{system-eta}. Hence, writing for simplicity $u=u([\eta],\lb)$
and assuming $\eta$ is an arbitrary representative of $[\eta]\in\Pha$, we have
\begin{equation*}
\Delta u+2|\eta+D''\xi|_0^2e^{-2(u+r)}-\lb=0
\end{equation*}
for some $r\in\R$. On the other hand, applying $\vfs$ to this equation yields
\begin{equation}\begin{aligned}
0&=\vfs(\Delta u+2|\eta+D''\xi|_0^2e^{-2(u+r)}-\lb)\\
 &=\Delta\vfs u+2ce^{-2r}|\vfs(\eta+D''\xi)|_0^2e^{-2\vfs u}-\lb.
\end{aligned}\end{equation}
The second term of the third member above is justified by Lemma \ref{prop.vfs.em-holom} part (ii) together
with the observation that $\vfs$ and the contraction operator $i\Lb$ commute: 
\begin{equation}\begin{aligned}
\vfs|\eta+D''\xi|_{H_u}^2&=\vfs|\phi|_{H_u}^2=c\,i\Lb\langle\vfs\phi\wedge\vfs\phi\rangle_{H_{\vfs u}}=\\
&=c|\vfs\phi|_{H_{\vfs u}}^2=c|\vfs(\eta+D''\xi)|_{H_{\vfs u}}^2,
\end{aligned}\end{equation}
where we write $\phi=(\eta+D''\xi)^{\ast H_u}$.

Applying $\vfs$ to the second equation in \eqref{system-eta} clearly (re)states that $\vfs(\eta+D''\xi)$
is $\vfs H_u$-antiholomorphic, thanks to Lemma \eqref{prop.vfs.em-anti-holom} part (iii). This shows us
that $\vfs u+r-\frac12\ln(c)$ is a solution to \eqref{system-eta} with $\vfs(\eta+D''\xi)$ in the place of $\eta+D''\xi$. 
Since the zero mean value component of this solution is unique we get $\vfs u([\eta],\lb)=u(\vfs[\eta],\lb)$.
Thus
\begin{equation}\label{equivar_vfs-F}
\vfs{\cal F}_\lb[\eta]=\vfs([\eta]^{\ast H_u})=[\vfs\eta]^{\ast H_{\vfs u}}=
[\vfs\eta]^{\ast H_{u(\vfs[\eta],\lb)}}={\cal F}_\lb(\vfs[\eta]),
\end{equation}
and we are done.
\end{proof}

\subsection{Applications}

For the curvature equation \eqref{curveq}, the normalization
we adopted in the metric implies $K_0\equiv 4\pi$. The class of curvatures $K$ is restricted 
to $|\phi|_{H_0}^2$ for any $[\phi]\in\Phh$. Thus we are interested in studying problem 
\begin{equation}\label{curveq-K=phi}
\Delta u+|\phi|_0^2e^{2u}-4\pi=0.
\end{equation}
We start with a non-existence lemma which recovers the result in \cite{KW3}.
\begin{lem}\label{non-exist-lem}
Let $[\phi]$ be a class whose divisor set is $(k-2)x_0$, $k>2$, for some $x_0\in S^2$. 
Then there is no radially symmetric solution $u$ for equation \eqref{curveq-K=phi}, respect to the axis
of $S^2$ passing through $x_0$. In particular, $|\phi|_0^2$ is not the curvature of a rotationally symmetric
metric on $S^2$ pointwise conformal to the standard metric.
\end{lem}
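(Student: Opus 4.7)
The plan is to invoke the classical Kazdan--Warner obstruction, which rules out any radially symmetric solution as soon as the curvature fails to be constant. I would set coordinates so $x_0 = N$ (the north pole), and let $\theta$ denote the polar angle from $x_0$, so that the chart $w=1/z$ satisfies $w(x_0)=0$ and $|w|=\tan(\theta/2)$.

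The first step is to compute $K=|\phi|_0^2$ explicitly as a function of $\theta$. Since $\Div(\phi)=(k-2)x_0$ is concentrated entirely at $N$, writing $\phi=g(z)\zeta_L\,dz$ forces the polynomial $g$ to be a nonzero constant (otherwise $\phi$ would have zeros in $S^2-\{N\}$). Using the transition $\zeta_L=w^k\zeta_{L,S}$, the formula \eqref{H0.def-alt} for $|\zeta_{L,S}|_{H_0}^2$, and the fact that in the chosen normalization $|dw|^2_{g_0}$ is a positive multiple of $(1+|w|^2)^2$, a short computation yields
\begin{equation*}
K(\theta) \;=\; C\,\sin^{2(k-2)}(\theta/2)
\end{equation*}
for some constant $C>0$. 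Because $k>2$, this function is strictly increasing on $(0,\pi)$, so $K'(\theta)>0$ throughout this interval.

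Next I would apply the Kazdan--Warner identity: for any smooth solution of \eqref{curveq-K=phi} and any first spherical harmonic $X$ on $S^2$,
\begin{equation*}
\int_{S^2}\langle\nabla X,\nabla K\rangle_{g_0}\,e^{2u}\,dV_{g_0}\;=\;0.
\end{equation*}
Taking $X$ to be the height function along the axis through $x_0$, i.e.\ $X(\theta)=\cos\theta$, and exploiting the hypothesis that $u$ (and hence the whole integrand) depends only on $\theta$, Fubini reduces the identity, up to a strictly positive constant, to
\begin{equation*}
\int_0^\pi K'(\theta)\,\sin^2\theta\,e^{2u(\theta)}\,d\theta \;=\; 0.
\end{equation*}
The integrand is strictly positive on $(0,\pi)$ by the monotonicity of $K$, so the integral is strictly positive, a contradiction. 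The ``in particular'' clause is then just a restatement: a rotationally symmetric conformal metric would have a rotationally symmetric $u$, which we have just excluded.

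The main obstacle is producing the explicit formula $K(\theta)=C\sin^{2(k-2)}(\theta/2)$ with due care for the metric normalization (area $1$, curvature $4\pi$) and for the $\zeta_L$ vs.\ $\zeta_{L,S}$ transition; once this expression is in hand, strict monotonicity is immediate and the Kazdan--Warner identity closes the argument with no further analysis.
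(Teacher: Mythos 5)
Your argument is correct, but it takes a genuinely different route from the paper's. You compute $K=|\phi|_0^2=C\sin^{2(k-2)}(\theta/2)$ (which is right: with $x_0=N$ the polynomial $g$ is constant, and the transition $\zeta_L=w^k\zeta_{L,S}$ together with \eqref{H0.def} gives $|\phi|_0^2\propto(1+|z|^2)^{2-k}=|w|^{2(k-2)}(1+|w|^2)^{2-k}$, i.e.\ $\sin^{2(k-2)}(\theta/2)$ up to a constant), and then you invoke the Kazdan--Warner identity from \cite{KW1,KW3}; since $K$ is strictly monotone in $\theta$, the integrand $\langle\nabla X,\nabla K\rangle e^{2u}$ has a fixed sign and the identity fails. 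This is essentially the classical route --- the paper itself remarks that the lemma ``recovers the result in \cite{KW3}'' --- and it actually proves \emph{more} than the statement: the Kazdan--Warner identity holds for every solution $u$, symmetric or not, and $\langle\nabla X,\nabla K\rangle$ already has a pointwise sign, so no conformal metric whatsoever has curvature $|\phi|_0^2$; the radial symmetry of $u$ (and your Fubini reduction) is not needed. The paper's proof instead stays inside the holomorphic-triples framework: a radially symmetric solution at $\lb=4\pi$ is dualized to a solution of \eqref{system-eta} with $[\eta]=[\phi]^{\ast H_u}$; the rotational symmetry of $u$ and $|\phi|_0^2$ forces the coefficients \eqref{bj-coeff} to satisfy $b_1\neq0$ and $b_j=0$ for $j\geq2$, which by the computation in Lemma \ref{P_1-low-range} places $[\eta]$ in $\Pha_1$, and Corollary \ref{eta-range-exist} then confines the admissible $\lb$ to $(0,4\pi)$, contradicting $\lb=4\pi$. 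There the symmetry hypothesis is used essentially, and the point is to exhibit the divisor/stability machinery rather than to obtain the sharpest non-existence statement. One small caution: your opening claim that the obstruction rules out radially symmetric solutions ``as soon as the curvature fails to be constant'' is too strong --- the identity obstructs only when $\langle\nabla K,\nabla X\rangle$ has a fixed sign for some first harmonic $X$, which here follows from the strict monotonicity of $K(\theta)$; a non-monotone radial $K$ would not be excluded this way. Your actual argument does use the monotonicity correctly, so this does not affect the proof.
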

\begin{proof}
If a radially symmetric solution $u$ for \eqref{curveq-K=phi} existed we could set 
$\eta=\phi^{\ast H_u}$ and have a solution for system \eqref{system-eta} with $\lb=4\pi$. 
On the other hand, computing the coefficients $\{b_j\}$ from \eqref{bj-coeff} in the stereographic coordinate
chart $w$ with south pole at $x_0$ would provide us with $b_1\neq 0$ and $b_j=0$, for $2\leq j\leq k-1$, thanks
to the symmetry of both $u$ and $|\phi|_0^2$. Following the same argument as in the proof of 
Lemma \ref{P_1-low-range} we conclude that $[\eta]\in\Pha_1$. This is an absurd due to Corollary \ref{eta-range-exist},
and the assumed solution $u$ does not exist.
\end{proof}

Before we go to the existence results on curvatures we first state a nice consequence of a standard differential
topology fact. The first part of this result was already known for more general functions \cite{KW1,Ms}.
The second part, though, seems to be new before \cite{Go2}.
\begin{thm}\label{exist-top-arg}
Let $[\phi]\in\Phh$ and $0<\lb<4\pi$. Then there exists at least one solution $u$ for \eqref{system-phi}. 
The cardinality of the set os solutions for $[\phi]$ equals the cardinality of the preimage 
${\cal F}_\lb^{-1}([\phi])$.
\end{thm}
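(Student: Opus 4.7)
My plan is to establish a natural bijection between the solutions of \eqref{system-phi} for a fixed representative $\phi\in[\phi]$ and the preimage ${\cal F}_\lb^{-1}([\phi])\subset\Pha$, which gives the cardinality statement, and then to prove surjectivity of ${\cal F}_\lb$ by a degree argument, which gives existence. The bridge between \eqref{system-phi} and \eqref{system-eta} is the metric duality $\phi\leftrightarrow\phi^{\ast H_u}$.

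For the bijection, given a solution $u$ of \eqref{system-phi}, set $\eta_u:=\phi^{\ast H_u}\in\Om^{0,1}(L^\ast)$; it is $H_u$-antiholomorphic since $\phi$ is holomorphic, so $(u,\xi=0)$ satisfies the second equation of \eqref{system-eta}. The pointwise identity
\begin{equation*}
|\eta_u|_0^2\,e^{-2u}=|\eta_u|_{H_u}^2=|\phi|_{H_u}^2=|\phi|_0^2\,e^{2u},
\end{equation*}
which uses the inverse scaling of norms on $L^\ast$ and the isometry property of $\ast H_u$, reduces the first equation of \eqref{system-eta} to \eqref{system-phi}. Hence $u$ differs from $u([\eta_u],\lb)$ by a constant (Lemma \ref{exist-sol-eta}), and the double-dual identity $(\phi^{\ast H_u})^{\ast H_u}=\phi$ gives ${\cal F}_\lb([\eta_u])=[\phi]$ in $\Phh$. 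Conversely, starting from $[\eta]\in{\cal F}_\lb^{-1}([\phi])$, the zero-mean solution $u_0=u([\eta],\lb)$ together with a rescaling of $\eta$ matched by an additive constant in $u_0$ produces a solution of \eqref{system-phi}. These two constructions are mutually inverse; the uniqueness in Lemma \ref{exist-sol-eta} ensures that distinct $u$'s yield distinct classes $[\eta_u]$, proving the cardinality assertion.

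For surjectivity, the spaces $\Pha,\Phh$ are both closed oriented copies of $\mathbb{CP}^{k-2}$. Since $\lb<4\pi\leq 4\pi(\Deg(L_2)-\Div[\eta])$ holds for every $[\eta]\neq 0$ by Lemmas \ref{div-eta<=degL2} and \ref{existence-terms-diveta}, Lemma \ref{exist-sol-eta} makes ${\cal F}_\lb$ smooth on all of $\Pha$. Define the homotopy
\begin{equation*}
\mathcal{H}:\Pha\times[0,1]\to\Phh,\qquad \mathcal{H}([\eta],t)=[\eta]^{\ast H_{t\,u([\eta],\lb)}}.
\end{equation*}
At $t=1$ this equals ${\cal F}_\lb$; at $t=0$ it is the map $[\eta]\mapsto[\eta]^{\ast H_0}$, the inverse of the anti-isomorphism \eqref{ha-map-hh} on projective classes and therefore a diffeomorphism of $\mathbb{CP}^{k-2}$ of degree $\pm1$. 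Homotopy invariance of the Brouwer degree forces $\deg{\cal F}_\lb=\pm 1$, and surjectivity follows. The main obstacle I anticipate is the scalar bookkeeping in the bijection: the representative of $[\eta]$ and the additive constant in $u$ must be tuned jointly for the forward and backward maps to be genuine inverses, but the uniqueness in Lemma \ref{exist-sol-eta} and the involutivity of $\ast H_u$ on projective classes are exactly the inputs that close the loop.
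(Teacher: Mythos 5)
Your proposal is correct and follows the same two-part skeleton as the paper (a bijection between solutions of \eqref{system-phi} and ${\cal F}_\lb^{-1}([\phi])$ via $H_u$-duality, plus a degree-theoretic surjectivity argument), but the surjectivity step is carried out by a genuinely different homotopy. The paper keeps the dualizing metric tied to the solution and instead deforms the parameter, invoking Theorem 4.3(3) of \cite{Go2} to get $\lim_{\lb\to0^+}{\cal F}_\lb=\ast H_0$ and then concluding that every ${\cal F}_\lb$ in the continuous family has the degree of the diffeomorphism $\ast H_0$. You fix $\lb$ and deform the metric, via $\mathcal{H}([\eta],t)=[\eta]^{\ast H_{t\,u([\eta],\lb)}}$, interpolating from $\ast H_0$ at $t=0$ to ${\cal F}_\lb$ at $t=1$. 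Your route avoids the external asymptotic result from \cite{Go2} entirely: all it needs is the smooth dependence of $u([\eta],\lb)$ on $[\eta]$ (Lemma \ref{exist-sol-eta}) together with the smooth dependence of the $H$-antiholomorphic (harmonic) representative on the metric, which is standard Hodge theory; you should state that second ingredient explicitly, since it is what makes $\mathcal{H}$ continuous. (The same Hodge-theoretic continuity is implicitly needed for the paper's family $\lb\mapsto{\cal F}_\lb$ as well, so you are not assuming more.) Your writing $\deg{\cal F}_\lb=\pm1$ rather than $1$ is also the more careful statement, since $\ast H_0$ is conjugate-linear and may reverse orientation on $\mathbb{CP}^{k-2}$ when $k-2$ is odd; either way the degree is nonzero and surjectivity follows. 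The bijection half of your argument, including the bookkeeping of multiplicative constants on $\eta$ against additive constants on $u$, matches what the paper compresses into its final sentence.
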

\begin{proof}
Notice that in this range for $\lb$ the map ${\cal F}_\lb$ is defined for all $[\eta]\in\Pha$. Because of
Theorem 4.3 part (3) of \cite{Go2} we have that 
\begin{equation}
\lim_{\lb\to0^+}{\cal F}_\lb=\ast H_0.
\end{equation}
Set ${\cal F}_0=\ast H_0$. Since the family $\lb\to{\cal F}_\lb$ is continuous and ${\cal F}_0:\Pha\to\Phh$ is
a diffeomorphism, the maps ${\cal F}_\lb$ all have topological degree 1. In particular they are surjective.
Hence, any $[\phi]\in\Phh$ is of the form $[\eta]^{\ast H_{u([\eta],\lb)}}$, and so has a solution.
Because uniqueness of solutions holds for $[\eta]$ each solution for $[\phi]$ corresponds to exactly
one element of ${\cal F}_\lb^{-1}([\phi])$.
\end{proof}

Let $S\subset Iso(S^2)$ be a subgroup of the group of isometries of the euclidean sphere. Because of
Lemma \ref{F-lb-commutes-iso} any $S$-orbit of $\Pha$ is taken by ${\cal F}_\lb$ onto some $S$-orbit of
$\Phh$.

We first look at orbits which are unitary and isolated. If $\{[\eta]\}$ is such an orbit, meaning
that for any $[\tilde\eta]$ sufficiently close to $[\eta]$, $\{[\tilde\eta]\}$ is not an $S$-orbit, then
making $[\phi]={\cal F}_0[\eta]$ we get that $\{[\phi]\}$ is also a unitary and isolated $S$-orbit.
The continuity of the family ${\cal F}_\lb$ forces that ${\cal F}_\lb[\eta]=[\phi]$ for all $\lb$ in the
range of solutions for $[\eta]$. If $[\eta]$ is not in $\Pha_1$ we get that $|\phi|_0^2$ is the curvature
of a conformal metric. The next three theorems explore this idea for some symmetric classes $[\phi]$.
In the following we fix some arbitrary point $x_0\in S^2$ as reference, and denote by $l$ the axis 
passing through $\{x_0,-x_0\}$.
\begin{thm}\label{exist-a,b}
For positive integers $a,b$ with $a+b=k-2$ let ${\cal D}=a\,x_0+b\,(-x_0)$. Then the class $[\phi]\in\Phh$
with divisor ${\cal D}$ admits solution for $\lb=4\pi$. $|\phi|_0^2$ is the curvature of a metric conformal
to $g_0$.
\end{thm}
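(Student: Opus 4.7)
The idea is to exploit the rotational symmetry of $[\phi]$ around the axis $l$ through $x_0$ and $-x_0$, and use the equivariance of the family $\mathcal{F}_\lb$ to transport the trivial ``existence'' at $\lb\to0^+$ all the way up to $\lb=4\pi$, while ensuring that the curve of preimages never lands in the forbidden stratum $\Pha_1$.

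Let $S\subset\mathrm{Iso}(S^2)\simeq SO(2)$ be the circle of rotations about $l$. Since the divisor $\mathcal{D}=a\,x_0+b\,(-x_0)$ is $S$-invariant, $[\phi]$ is an $S$-fixed point of $\Phh$. A direct diagonalization of the $S$-action on the basis $\{z^j\zeta_L\,dz\}_{0\leq j\leq k-2}$ of $\hh$ (choosing $z=z_{-x_0}$) shows that the $S$-fixed points of $\Phh\simeq\mathbb{CP}^{k-2}$ are exactly the $k-1$ classes with divisors $j\,x_0+(k-2-j)(-x_0)$, $0\leq j\leq k-2$; in particular they form a discrete set. Set $[\eta_0]=\mathcal{F}_0^{-1}([\phi])\in\Pha$, which is well defined because $\mathcal{F}_0=\ast H_0$ is a diffeomorphism. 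By Lemma \ref{prop.vfs.em-holom}(ii) the metric $H_0$ is $S$-invariant up to a constant factor, so $\ast H_0$ descends to an $S$-equivariant map on projectivizations; thus $[\eta_0]$ is $S$-fixed as well.

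The key observation is that $a,b\geq1$ forces $\mathrm{Div}([\phi])$ to be supported on two distinct points, so by Lemma \ref{P_1-low-range} we have $[\eta_0]\notin\Pha_1$. Consequently $[\eta_0]\in\Pha_m-\Pha_{m-1}$ for some $m\geq2$, and Corollary \ref{eta-range-exist} provides a unique solution $u([\eta_0],\lb)$ of \eqref{system-eta} for every $\lb\in(0,4\pi m)$, an interval that contains $4\pi$. I now claim $\mathcal{F}_\lb([\eta_0])=[\phi]$ for every such $\lb$. Indeed, Lemma \ref{F-lb-commutes-iso} gives $\vfs\mathcal{F}_\lb([\eta_0])=\mathcal{F}_\lb(\vfs[\eta_0])=\mathcal{F}_\lb([\eta_0])$ for every $\vf\in S$, so $\mathcal{F}_\lb([\eta_0])$ is $S$-fixed; moreover, by Lemma \ref{exist-sol-eta}, $u([\eta_0],\lb)$ depends smoothly on $\lb$, whence $\lb\mapsto\mathcal{F}_\lb([\eta_0])$ is a continuous map from the connected interval $(0,4\pi m)$ into the discrete $S$-fixed locus of $\Phh$. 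Such a map is necessarily constant, and it tends to $\mathcal{F}_0([\eta_0])=[\phi]$ as $\lb\to0^+$ (by the limit used in the proof of Theorem \ref{exist-top-arg}), so the constant value is $[\phi]$.

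Evaluating at $\lb=4\pi$ yields a solution of \eqref{system-phi} with $[\phi]$ as the holomorphic datum; absorbing the factor $2$ by the constant shift $u\mapsto u+\tfrac12\ln 2$ gives a solution of \eqref{curveq-K=phi}, i.e.\ a pointwise conformal metric $e^{2u}g_0$ whose gaussian curvature is $|\phi|_0^2$. The main obstacle is the rigorous justification of the ``continuous-into-discrete'' step: one has to verify both the discreteness of the $S$-fixed set of $\Phh$ (a short concrete computation in the standard basis of $\hh$) and that $\mathcal{F}_\lb([\eta_0])$ is actually defined throughout $(0,4\pi m)$, which in turn is exactly what $[\eta_0]\notin\Pha_1$ and Corollary \ref{eta-range-exist} buy us.
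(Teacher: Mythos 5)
Your proposal is correct and follows essentially the same route as the paper: identify the classes with divisor $a\,x_0+b\,(-x_0)$ as the isolated fixed points of the rotation group about $l$, use the equivariance of ${\cal F}_\lb$ (Lemma \ref{F-lb-commutes-iso}) plus continuity in $\lb$ to pin ${\cal F}_\lb[\eta_0]=[\phi]$ throughout the existence interval, and use $a,b\geq1$ together with Lemma \ref{P_1-low-range} and Corollary \ref{eta-range-exist} to see that this interval contains $\lb=4\pi$. Your write-up is in fact a bit more explicit than the paper's about the ``continuous map into a discrete set'' step and the final normalization, but the underlying argument is identical.
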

\begin{proof}
Let $S$ be the group of rotations about $l$. The classes in $\Phh$ with divisors given by $a\,x_0+b\,(-x_0)$
for $a,b\geq 0$ are the only classes fixed by the $S$-action. There are finite many of those, hence each one of them
is isolated. Finally, in case $a$ and $b$ are strictly positive the corresponding $[\eta]=[\phi]^{\ast H_0}$ 
is not in $\Pha_1$, hence $[\phi]$ admits solution in \eqref{system-phi} for $\lb=4\pi$. 
\end{proof}

\begin{thm}\label{exist-2a,n}
Let ${\cal D}=a\,x_0+a\,(-x_0)+{\cal E}$, $a>0$, where ${\cal E}$ is a divisor constructed as follows: 
choose an integer $n>2a$ and let ${\cal E}=\sum_{j=1}^n x_j$. The points $\{x_j\}_{1\leq j\leq n}$ lie in the
equator respect to $l$ and are evenly separated. The class $[\phi]$ with divisor ${\cal D}$ admits solution
at $\lb=4\pi$, and its $H_0$-norm squared is the curvature of a conformal metric.
\end{thm}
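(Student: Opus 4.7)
The plan is to mimic the argument of Theorem \ref{exist-a,b} but with a finite symmetry group in place of $SO(2)$. I would pick a finite subgroup $S\subset{\rm Iso}(S^2)$ that fixes $[\phi]$, exhibit $\{[\phi]\}$ as an isolated point of the $S$-fixed locus in $\Phh$, and then use the $S$-equivariance of ${\cal F}_\lb$ (Lemma \ref{F-lb-commutes-iso}) together with its continuity in $\lb$ to force ${\cal F}_\lb([\eta])$ to remain equal to $[\phi]$ throughout the entire existence range of $[\eta]:=(\ast H_0)^{-1}[\phi]$. Once $[\eta]\notin\Pha_1$ is verified, this range will include $\lb=4\pi$ and provide the desired solution.

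For the symmetry group I would take $S$ generated by the rotations about $l$ through angles $2\pi/n$, the reflection through the equatorial plane (which swaps $x_0\leftrightarrow -x_0$ and fixes the equator pointwise), and the reflection through the meridian plane containing $l$ and $x_1$. This yields $|S|=4n$, and ${\cal D}$ is manifestly $S$-invariant, so $[\phi]$ is $S$-fixed in $\Phh$.

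The crucial step is isolation. A class in $\Phh$ is $S$-fixed iff its divisor is $S$-invariant, because on $S^2$ an effective divisor of degree $k-2$ determines the corresponding projective class uniquely. The $S$-orbits on $S^2$ have sizes $2$ (the pair $\{x_0,-x_0\}$), $n$ (the regular equatorial $n$-gon $\{x_j\}$ and its arc-midpoint $n$-gon $\{y_j\}$), $2n$ (other equatorial orbits and certain off-equator orbits sitting on reflection axes), or $4n$ (generic off-equator orbits). Any $S$-invariant effective divisor of degree $k-2=2a+n$ is a non-negative integer combination of these orbit sums. Since $n>2a$ gives $2a+n<2n$, no orbit of size $\ge 2n$ can contribute; the divisor must therefore take the form $c_0\cdot\{x_0,-x_0\}+c_1\cdot\{x_j\}+c_2\cdot\{y_j\}$ with $2c_0+n(c_1+c_2)=2a+n$ and $c_0,c_1,c_2\ge 0$, an equation admitting only two solutions for $n$ odd and three for $n$ even. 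Hence the $S$-fixed subset of $\Phh$ is finite, and $\{[\phi]\}$ is isolated.

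To conclude, I would observe that $H_0$ is ${\rm Iso}(S^2)$-invariant up to a constant scalar (a direct consequence of Lemma \ref{prop.vfs.em-holom}(ii)), whence $\ast H_0$ is $S$-equivariant and $[\eta]=(\ast H_0)^{-1}[\phi]$ is $S$-fixed in $\Pha$. Since the support of ${\cal D}$ contains at least the three distinct points $x_0$, $-x_0$, $x_1$, the divisor of $[\phi]$ is not of the form $(k-2)\cdot x$, so Lemma \ref{P_1-low-range} gives $[\eta]\notin\Pha_1$; Corollary \ref{eta-range-exist} then places the existence interval for \eqref{system-eta} at $(0,4\pi m)$ with $m\ge 2$, which contains $\lb=4\pi$. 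By Lemma \ref{F-lb-commutes-iso} the continuous path $\lb\mapsto{\cal F}_\lb([\eta])$ lies entirely in the finite $S$-fixed subset of $\Phh$, hence is constant and equal to $\lim_{\lb\to 0^+}{\cal F}_\lb([\eta])=\ast H_0([\eta])=[\phi]$. In particular ${\cal F}_{4\pi}([\eta])=[\phi]$, which via the equivalence of \eqref{system-eta} and \eqref{system-phi} yields a smooth solution of \eqref{curveq-K=phi} and identifies $|\phi|_0^2$ as the Gaussian curvature of a conformally related metric. The main obstacle is the orbit enumeration step, where the hypothesis $n>2a$ is essential to kill the continuous families of $S$-invariant divisors that would otherwise arise from larger orbits.
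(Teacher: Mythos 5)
Your proposal is correct and follows essentially the same route as the paper: the same group $S$ generated by the $2\pi/n$ rotation and the two reflections, the same use of the bound $2a+n<2n$ to show the $S$-fixed locus in $\Phh$ is finite (your orbit-size enumeration is just a slightly more systematic packaging of the paper's "non-polar zeros come in multiples of $n$" argument), and the same conclusion via $[\eta]\notin\Pha_1$, the equivariance of ${\cal F}_\lb$, and continuity in $\lb$. No gaps.
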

\begin{proof}
Let $\vf_1$ be a rotation of $2\pi/n$ about $l$. Clearly $[\phi]$ is fixed by the $\vfs_1$ action, but is not 
isolated. To accomplish that feature we consider $\vf_2$ the reflection respect to a plane $\beta$ containing 
$l$ and $x_1$, and $\vf_3$ the reflection respect to the equatorial plane. 
Let $S$ be the isometry subgroup generated by $\{\vf_1,\vf_2,\vf_3\}$. If $[\tilde\phi]$ is a class
fixed by $S$ then any zero of $[\tilde\phi]$ that is not $x_0$ or $-x_0$ repeats itself $n$ times along
a parallel, hence the number os those zeros is a multiple of $n$. Since there are $k-2=n+2a<2n$ zeros 
we conclude that either all zeros are in $\{x_0,-x_0\}$ or else there are $n$ zeros in a parallel and 
$2a$ zeros in $\{x_0,-x_0\}$. In the second case the $\vf_3$ invariance forces exactly $a$ zeros in each
of $x_0,-x_0$ and the parallel to be the equator. Still, there are two ways to inscribe the regular 
n-edge polygon inside the equatorial circle symmetrically respect to $\vf_2$. In either case we conclude that
the set of fixed points of $S$ is discrete and its ${\cal F}_0^{-1}$ image is disjoint of $\Pha_1$.
From this the assertion of the lemma follows.
\end{proof}

We now show a less trivial example of existence where the fixed points of the symmetry may not be fixed 
in the dynamics $\lb\mapsto{\cal F}_\lb$.
Let $S\subset Iso(S^2)$ be a subgroup. Let $Y^{1,0}\subset\Phh$ be a closed (compact, without boundary)
differentiable submanifold invariant for the $S$-action. Set $Y^{0,1}=(Y^{1,0})^{\ast H_0}$ its dual.
If ${\cal F}_\lb(Y^{0,1})\subset Y^{1,0}$ for all $\lb$ that makes sense and $Y^{0,1}\cap\Pha_1=\emptyset$ then
the same argument of Theorem \ref{exist-top-arg} applies since ${\cal F}_\lb:Y^{0,1}\to Y^{1,0}$ has degree
1, and for all $[\phi]\in Y^{1,0}$ there is a solution when $\lb=4\pi$.

\begin{thm}\label{existence_lem-ger}
Let ${\cal D}=a\,x_0+b\,(-x_0)+{\cal E}$ be a divisor with: $n>a>0$, $n>b>0$ and ${\cal E}$ is a divisor composed
by $mn$ zeros (counting multiplicity) evenly distributed in $m$ parallels (the parallels may not be pairwise 
distinct). Multiple zeros are allowed and the points $x_0$ and $-x_0$ might contain degenerated parallels. 
Then the class $[\phi]$ with this divisor has a solution in $\lb=4\pi$.
\end{thm}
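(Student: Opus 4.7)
The plan is to carry out the general strategy outlined just before the statement, so that the existence of a solution is reduced to a non-vanishing topological degree. Let $S\subset\text{Iso}(S^2)$ be the cyclic subgroup of order $n$ generated by the rotation $\vf$ by angle $2\pi/n$ about the axis $l$, and use the stereographic coordinate $z$ with $z(x_0)=0$, so that $\vf$ acts as $z\mapsto\omega z$ with $\omega=e^{2\pi i/n}$. A class $[g(z)\zeta_L\,dz]\in\Phh$ is $\vf^\ast$-invariant iff $g(\omega z)=\alpha\,g(z)$ for some $\alpha\in\C$, and comparing monomials forces $g(z)=z^j Q(z^n)$ for a unique $j\in\{0,\ldots,n-1\}$ and a polynomial $Q$ of degree at most $\lfloor(k-2-j)/n\rfloor$. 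Thus the $S$-fixed set in $\Phh$ is a disjoint union $\bigsqcup_{j=0}^{n-1}F_j$ of closed projective subspaces $F_j\cong\mathbb{CP}^{\lfloor(k-2-j)/n\rfloor}$. Since the divisor ${\cal D}$ of the theorem is $S$-invariant with multiplicity at $x_0$ congruent to $a\pmod n$, the target class $[\phi]$ lies in $F_a\cong\mathbb{CP}^m$; we set $Y^{1,0}:=F_a$ and $Y^{0,1}:=(Y^{1,0})^{\ast H_0}$.

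We first show that $\F_\lb(Y^{0,1})\subset Y^{1,0}$ throughout the existence range of $\lb$. Because $H_0$ of \eqref{H0.def} depends only on $|z|$, it is $\vf$-invariant; hence $\ast H_0$ commutes with $\vf^\ast$, and $Y^{0,1}$ is the $S$-fixed component of $\Pha$ dual to $F_a$, with $\F_0=\ast H_0:Y^{0,1}\to Y^{1,0}$ a diffeomorphism. Lemma \ref{F-lb-commutes-iso} gives that $\F_\lb$ is $S$-equivariant, so $\F_\lb(Y^{0,1})\subset\bigsqcup_j F_j$. Continuity of $\lb\mapsto\F_\lb$ (Lemma \ref{exist-sol-eta}), together with the disjointness of the closed components $F_j$, then forces $\F_\lb(Y^{0,1})$ to remain in the initial component $F_a=Y^{1,0}$.

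We next verify that $Y^{0,1}\cap\Pha_1=\emptyset$, which by Corollary \ref{eta-range-exist} will guarantee that $\F_\lb$ is defined on all of $Y^{0,1}$ for every $\lb\in(0,8\pi)$, in particular at $\lb=4\pi$. Assume some $[\eta]\in Y^{0,1}\cap\Pha_1$. By Lemma \ref{P_1-low-range}, $\Div([\eta]^{\ast H_0})=(k-2)x$ for some $x\in S^2$; and since $[\eta]^{\ast H_0}\in Y^{1,0}$ is $S$-invariant, the point $x$ must be $S$-fixed, so $x\in\{x_0,-x_0\}$. In $F_a$ the multiplicities at $x_0$ and $-x_0$ are of the form $a+n\cdot\ord_0 Q$ and $k-2-a-n\deg Q$, i.e. $\equiv a\pmod n$ and $\equiv b\pmod n$, respectively. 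With $0<a,b<n$ both residues are nonzero, while $k-2\equiv a+b\pmod n$ matches neither $a$ nor $b$, so neither multiplicity can ever attain $k-2$. This contradiction establishes $Y^{0,1}\cap\Pha_1=\emptyset$; the degenerate parallels at $x_0$ or $-x_0$ allowed in the hypothesis only shift the multiplicities by multiples of $n$ and hence do not affect this residue calculation.

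Finally, $\F_\lb:Y^{0,1}\to Y^{1,0}$ is a continuous family of smooth maps between compact connected oriented manifolds of the same real dimension $2m$, and $\F_0$ is a diffeomorphism, hence of topological degree $1$. Homotopy invariance of the degree gives $\deg\F_{4\pi}=1$, so $\F_{4\pi}$ is surjective, and the given $[\phi]\in Y^{1,0}$ equals $\F_{4\pi}([\eta])$ for some $[\eta]\in Y^{0,1}$, which by definition furnishes the solution of \eqref{system-phi} at $\lb=4\pi$ demanded by the theorem. The main difficulty in this plan is the residue calculation of the third paragraph: one must confirm that the pigeonhole-type reasoning really eliminates every class in $Y^{1,0}$ whose divisor collapses onto a single pole, which is precisely what the strict inequalities $0<a<n$ and $0<b<n$ are designed to guarantee.
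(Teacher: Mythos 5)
Your proposal is correct and follows essentially the same route as the paper: identify $Y^{1,0}$ with the $\mathbb{CP}^m$ component of the fixed-point set of the rotation subgroup of order $n$, use the equivariance of ${\cal F}_\lb$ (Lemma \ref{F-lb-commutes-iso}) plus continuity and the disjointness of the fixed components to confine ${\cal F}_\lb(Y^{0,1})$ to $Y^{1,0}$, check $Y^{0,1}\cap\Pha_1=\emptyset$ from $a,b>0$, and conclude by the degree-one argument of Theorem \ref{exist-top-arg}. Your mod-$n$ residue verification of $Y^{0,1}\cap\Pha_1=\emptyset$ is a slightly more explicit version of the paper's one-line remark, but the argument is the same.
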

\begin{proof}
Let $Y^{1,0}\subset\Phh$ be the set of all classes whose divisors are described by the lemma. We first show
it is a differentiable submanifold of $\Phh$, by exhibiting an embedding $i:\mathbb{CP}^m\to\Phh$ 
with image $Y^{1,0}$. Let $\zeta$ be the cannonical section of $L$ with $k$ zeros in $-x_0$, and let $z$ be 
a stereographic coordinate chart with $z(x_0)=0$. A representative of $[\phi]\in Y^{1,0}$ is of the 
form $\phi=z^ag(z)\zeta dz$ for some polynomial $g(z)$ of degree smaller than or equal to $k-2-a-b=mn$. 
Since the divisor of $g(z)\zeta\,dz$ is ${\cal E}+(a+b)(-x_0)$ a closer look at this structure reveals that 
$g(z)=h(z^n)$ where $h(v)$ is a polynomial in $v$ of degree no greater than $m$. The vector space of such 
polynomials is identified with $\C^{m+1}$, and there is an injective homomorphism $h\mapsto z^ah(z^n)\zeta\,dz$,
which passes to the projectivizations $\mathbb{CP}^m\to\Phh$, giving us the above mentioned embedding, and
$Y^{1,0}\simeq\mathbb{CP}^m$.

Let $Y^{0,1}=(Y^{1,0})^{\ast H_0}$. Clearly $Y^{0,1}\cap\Pha_1=\emptyset$ because $a,b\neq0$. 
We claim that ${\cal F}_\lb(Y^{0,1})\subset Y^{1,0}$ for all $\lb$ in an open range containing $(0,4\pi]$.
To see that we'd rather index the space $Y^{1,0}\equiv Y^{1,0}_{a,b}$ and consider all such 
submanifolds $Y^{1,0}_{a',b'}$ for $a'$ an integer, $0\leq a'<n$ and $b'=(k-2-a')\text{mod}\,n$. 
Notice that in the space $Y^{1,0}_{a',b'}$ the number $m'$ of ``parallels'' may differ from $m$, and
the cases $a'=0$ or  $b'=0$ occur, but that does not matter for our argument. 

Setting $S$ as the subgroup generated by the rotation of $2\pi/n$ about $l$ it becomes clear that
$[\phi]\in\Phh$ is $S$-invariant if and only if $[\phi]\in\cup_{0\leq a'<n}Y^{1,0}_{a',b'}$. 
Hence the image ${\cal F}_\lb(Y^{0,1})$ is contained in the union of the components $Y^{1,0}_{a',b'}$, 
each of them being a copy of some complex projective space and pairwise disjoint. The continuity of 
$\lb\mapsto{\cal F}_\lb$ then precludes the image ${\cal F}_\lb(Y^{0,1})$ from leaving the original copy $Y^{1,0}_{a,b}$. 
This concludes the lemma. 
\end{proof}

\begin{rem}
It is interesting to look at the actual functions $K=2|\phi|_0^2$ in the previous lemmas. Let
 $z=z_{-x_0}$ a stereographic coordinate that vanishes at $x_0$, and $\zeta$ a holomorphic section 
with divisor $k(-x_0)$. A general holomorphic $\phi=g\,\zeta_L dz$ has norm
\begin{equation}\label{phi-squared}
   |\phi|_{H_0}^2=2\pi\frac{|a_0+a_1z+\dots+a_{k-2}z^{k-2}|^2}{(1+|z|^2)^{k-2}}
\end{equation}
for complex constants $a_j$, $0\leq j\leq k-2$ (the factor $2\pi$ is due to the normalization $|S^2|=1$
on the tangent bundle). Hence, the following functions are curvatures in the conformal structure of $g_0$:
\begin{enumerate}
\item $K(z)=|z|^{2a}(1+|z|^2)^{2-k}$ if $0<a<k-2$ (Theorem \ref{exist-a,b});
\item $K(z)=|z|^{2a}|z^n-1|^2(1+|z|^2)^{2-k}$ if $n>2a>0$ and $2a+n=k-2$ (Theorem \ref{exist-2a,n});
\item $K(z)=|z|^{2a}|z^n-q_1|^2|z^n-q_2|^2\ldots|z^n-q_m|^2(1+|z|^2)^{2-k}$ for arbitrary complex
numbers $q_1,\ldots,q_m$, if $n>a>0$, $a+mn<k-2$ and $(k-2-a)\,\text{mod}\,n>0$ (Theorem \ref{existence_lem-ger});
\end{enumerate}
Notice in the above that, except in very few cases, the functions $K$ are not symmetric about the origin,
and the existence results of \cite{Ms} do not apply directly. Since all such functions have zeros one
cannot use the results on \cite{CY}.
\end{rem}
\begin{rem}
At this point of the research it seems to us that the holomorphicity of $\phi$ is not the key to obtaining
the existence results on \eqref{curveq}, but only the behaviour of $|\phi|_0^2$ around
its zeros. In a future work we intend to show existence results for \eqref{curveq} for a larger
class of \emph{smooth} functions $K\geq0$ with finite many zeros with even degrees, and spread over $S^2$
in a suitable way.
\end{rem}

\end{document}